\documentclass[12pt]{article}
\usepackage{amsmath,mathdots}
\usepackage{amssymb}
\usepackage{latexsym}
\usepackage{amsthm}
\usepackage{amsmath,amssymb,amsfonts,amsthm}
\usepackage{mdframed}
\usepackage{graphicx}
\usepackage{stmaryrd}
\usepackage{gensymb}
\usepackage{bm}
\usepackage{tikz}  
\usepackage{longtable}
\usepackage{stmaryrd}
\usepackage{multirow}
\usepackage{float}
\usepackage{array}
\usepackage{graphicx}
\usepackage{tikz}
\usepackage{enumerate}
\usepackage{enumitem}
\usetikzlibrary{matrix,arrows}
\usetikzlibrary{positioning}
\usetikzlibrary{fit}
\usetikzlibrary{patterns}

\usepackage[colorlinks,
linkcolor=blue,
anchorcolor=blue,
citecolor=blue
]{hyperref}
\usepackage[margin=2.9cm]{geometry}
\usepackage[normalem]{ulem}

\newtheorem{thm}{Theorem}[section]
\newtheorem{defn}[thm]{Definition}
\newtheorem{lem}[thm]{Lemma}

\newtheorem{coro}[thm]{Corollary}
\newtheorem{conj}{Conjecture}
\newtheorem{example}[thm]{Example}

\newtheorem{remark}[thm]{Remark}

\usepackage{graphicx}
\usepackage{tikz}
\usetikzlibrary{matrix,arrows}
\usetikzlibrary{positioning}
\usetikzlibrary{fit}
\usetikzlibrary{patterns}

\def\fS{\mathfrak{S}}
\def\fG{\mathfrak{G}}
\def\wilf{\mathsf{wilf}}
\def\dist{\mathsf{dist}}
\def\anc{\mathrm{anc}}
\def\mla{\mathrm{mla}}
\def\rr{\mathrm{r}}
\def\RLM{\mathrm{RLMax}}

\newcommand{\pattern}[4]{										% mesh pattern
	\raisebox{0.6ex}{
		\begin{tikzpicture}[scale=0.35, baseline=(current bounding box.center), #1]
		\foreach \x/\y in {#4}		\fill[gray!50] (\x,\y) rectangle +(1,1);
		\draw (0.01,0.01) grid (#2+0.99,#2+0.99);
		\foreach \x/\y in {#3}		\filldraw (\x,\y) circle (6pt);
		\end{tikzpicture}}
}

\definecolor{red}{rgb}{1,0,0}
{}

 \allowdisplaybreaks

\begin{document}

\begin{center}
{\large On mesh patterns of short length: Equidistribution and enumeration}
\end{center}

\begin{center}
{\small
Qi Fang$^{a}$, Shishuo Fu$^{b}$, Sergey Kitaev$^{c}$, Haijun Li$^{d}$, Xinyu Su$^{e}$ and Ziyao Sun$^{f,\dagger}$
\\[6pt]

$^{a, b, d, f}$College of Mathematics and Statistics, Chongqing University,\\ Chongqing 401331, PR China

$^{b}$Center for Discrete Mathematics, Chongqing University,\\ Chongqing 401331, PR China

$^{c}$Department of Mathematics and Statistics, University of Strathclyde, \\ 26 Richmond Street, Glasgow G1 1XH, UK\\[6pt]

$^{e}$College of Mathematical Sciences \& Institute of Mathematics and Interdisciplinary Sciences, Tianjin Normal University, \\ Tianjin  300387, P. R. China\\[6pt]

$^\dagger$Corresponding author: Ziyao Sun\\[6pt]

Email:$^{a}${\tt  qifangpapers@163.com},
           $^{b}${\tt  fsshuo@cqu.edu.cn},
           $^{c}${\tt sergey.kitaev@strath.ac.uk},
           $^{d}${\tt  lihaijun@cqu.edu.cn},
           $^{e}${\tt suxinyu2220@163.com},
           $^{f}${\tt  sunziyao79@gmail.com}
           }
\end{center}

\noindent\textbf{Abstract.} The classification and enumeration of short mesh patterns have emerged as two central directions in the area. We make substantial progress on both fronts. We construct an involution and a bijection that establish distributional equivalences for two classes of length-$2$ mesh patterns, thereby resolving a conjecture from 2019 and a recent conjecture. As a consequence, the best known upper bounds for the numbers of distribution-equivalence and Wilf-equivalence classes drop to $106$ and $47$, respectively. Combined with the known lower bounds of 105 and 46, conjectured to be exact, these results leave both classifications hinging on a single distribution-equivalence question conjectured in 2019, whose resolution would at once settle the remaining Wilf-equivalence case. We further conjecture that this unresolved equidistribution also holds for involutions, a subclass of all permutations.

We also determine the distributions of three additional classes of length-$2$ mesh patterns through a detailed structural analysis. Our work combines bijective techniques with generating-function methods, yielding new insights into the structure and enumeration of short mesh patterns.

\noindent {\bf Keywords:}  mesh pattern, distribution, equidistribution, bijection, generating function. \\[-5mm]

\noindent {\bf AMS Subject Classifications:}  05A05, 05A15, 05A19.

\section{Introduction}\label{intro}

In this paper, $[a,b]$ denotes the set of integers from $a$ to $b$, that is, $[a,b] := \{a, a+1, \ldots, b\}$. In particular we write $[n]:=[1,n]$. We denote the set of permutations of $[n]$ by $\fS_n$, express each permutation $\sigma\in\fS_n$ using its one-line notation $\sigma_1\ldots\sigma_n$ with $\sigma_i=\sigma(i)$ for $1\le i\le n$, and refer to $\sigma$ as an $n$-permutation. A (classical) \emph{permutation pattern} is a shorter permutation. Given two permutations $\sigma\in\fS_n$ and $\pi\in\fS_m$, we say that $\sigma$ \emph{contains} $\pi$ as a pattern if some subsequence of the entries of $\sigma$ (again, in its one-line expression) has the same relative order as all of the entries of $\pi$. Otherwise $\sigma$ is said to \emph{avoid} $\pi$ as a pattern. For instance, the permutation $\sigma=216435$ contains both $132$ and $321$ as patterns, while it avoids the pattern $231$. Besides its well‑known origin in the stack‑sorting problem of algorithm analysis~\cite{Knu1997}, the permutation pattern has proven strikingly versatile, characterizing structures across many areas of mathematics. A quite recent illustration is the following remarkable theorem in Schubert calculus, due to Chen, Fan, and Ye:
\begin{thm}[{\cite[Theorem~1.1]{CFY2026}}]
The Grothendieck polynomial $\fG_w(x)$ is zero-one, i.e., has coefficients $0$ or $\pm 1$, if and only if $w$ avoids the patterns $1432$, $1342$, $13254$, $31524$, $12534$, and $21534$.
\end{thm}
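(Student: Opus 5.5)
The plan has two directions, both built around the pipe-dream (or bumpless pipe dream) model of $\fG_w$. In this model the coefficient of each monomial $x^{a}$ in $\fG_w(x)$ is a signed count of reduced and non-reduced pipe dreams of weight $a$; the sign depends only on the excess crossings. So "zero-one" means that, for every weight $a$, at most one pipe dream of that weight survives.

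The first step is the necessity direction, which is routine. For each of the six patterns $p\in\{1432,1342,13254,31524,12534,21534\}$, I would exhibit by direct computation a coefficient of $\fG_p$ with absolute value at least $2$. I would then invoke a pattern-inheritance lemma: if $w$ contains $p$, then some coefficient of $\fG_w$ dominates, in absolute value, a coefficient of $\fG_p$. For the Schubert case this is the Billey--Pawlowski style embedding of pipe dreams: the pipe dreams of $p$ are embedded into those of $w$ by filling the extra rows and columns with a fixed canonical (bottom) configuration. For the Grothendieck case I would use the same embedding applied to the top-degree homogeneous pieces. Since the top-degree piece is Castelnuovo--Mumford-type and $\fG_w$ expands with alternating signs by degree ($(-1)^{\deg-\ell(w)}$ times positive coefficients), no cancellation occurs. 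Hence a coefficient of size at least $2$ persists.

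The sufficiency direction is the harder one. Suppose $w$ avoids the six patterns. The first aim is a structural description of such $w$. Avoiding $1432$ and $1342$ in particular forces the "zero-one Schubert" structure of Fink--Mészáros--St.~Dizier, namely avoidance of $12543$, $13254$, $13524$, $13542$, $21543$, $125364$, $215364$, $215634$, $315264$, $315624$, $315642$. The reason is that each of these contains one of the six patterns or is excluded together with them. So $\mathfrak S_w$ is already zero-one, and its support is the integer points of a generalized permutahedron (a Minkowski sum of simplices attached to the Rothe diagram, via the Fink--Mészáros--St.~Dizier characterization). Next I would use Mészáros--Setiabrata--St.~Dizier: the support of $\fG_w$ is the set of lattice points between $\mathrm{supp}(\mathfrak S_w)$ and the top-degree support, and each homogeneous component of $\fG_w$ is, up to sign, a sum over faces of the Newton polytope. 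The key claim is that under the extra avoidance of $31524$, $12534$ and $21534$, every homogeneous component has coefficients $\pm1$. I would prove this claim by showing the diagram of $w$ decomposes, after removing empty rows and columns, into a direct/skew sum of pieces whose diagrams are "staircase-like" (Ferrers shapes or their transposes). For such pieces $\fG$ factors as a product of Grothendieck polynomials of dominant/Grassmannian-like blocks in disjoint variable sets, for which zero-one-ness is checked directly via set-valued tableaux with flagged bounds.

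The main obstacle is this last factorization step: showing that avoidance of the six patterns forces the diagram to split so that the K-theoretic (non-reduced) pipe dreams of different blocks do not interact. I would attack it by induction on $n$, using the transition/recursion $\fG_w = \fG_{w'}$-type formulas (Lascoux's K-transition) on the last descent. At each step I would check that both terms are zero-one, and that their supports are disjoint in each degree, with the pattern conditions guaranteeing disjointness. The case analysis on where $n$ sits relative to the last descent is where the three five-letter patterns are genuinely needed.
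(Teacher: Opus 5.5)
The paper does not prove this statement. It is quoted from Chen--Fan--Ye only as motivation, so your proposal has to stand on its own, and as written it has genuine gaps in both directions.

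\textbf{Necessity.} Your no-cancellation remark is fine, since signs are determined by degree. The problem is that an embedding applied to top-degree pieces cannot detect the witnesses. A computation with isobaric divided differences gives
\[
\fG_{1432}=\mathfrak{S}_{1432}-\bigl(x_1^2x_2^2+2x_1^2x_2x_3+2x_1x_2^2x_3\bigr)+x_1^2x_2^2x_3 .
\]
For the Grassmannian permutation $12534$ we have $\fG_{12534}=G_{(2)}(x_1,x_2,x_3)$. Its bottom part $h_2(x_1,x_2,x_3)$ and its top part $x_1x_2x_3(x_1+x_2+x_3)$ are zero-one, while its degree-$3$ part contains $-2x_1x_2x_3$. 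So for these two patterns both the Schubert and the Castelnuovo--Mumford components are zero-one, and every coefficient of absolute value $2$ sits in an intermediate degree.

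What you actually need is an inheritance statement valid in every degree. For instance, an injection from all (not necessarily reduced) pipe dreams with Demazure product $p$ into those with Demazure product $w$, changing weights by a fixed shift after relabelling variables. The crux is that the padding preserves the Demazure product. You do not address this, and it is not routine: already the Schubert analogue is a theorem of Fink--M\'esz\'aros--St.~Dizier.

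\textbf{Sufficiency.} This is where the theorem really lives, and the decisive step is false as stated.

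Direct sums do not factor in disjoint variables. The Schubert part of $\fG_{13254}$ is $\mathfrak{S}_{13254}=\mathfrak{S}_{132}\,\mathfrak{S}_{12354}=(x_1+x_2)(x_1+x_2+x_3+x_4)$. This is a product of two zero-one factors with coefficient $2$ on $x_1x_2$. So zero-one-ness of the blocks does not pass to $\fG_w$ without an argument controlling the shared variables.

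Nor are Grassmannian blocks zero-one. Both $1342$ and $12534$ are Grassmannian (shapes $(1,1)$ and $(2)$ in three variables), and $\fG_{1342}=x_1x_2+x_1x_3+x_2x_3-2x_1x_2x_3$. Checking blocks ``directly via set-valued tableaux'' therefore fails exactly on the forbidden patterns. The real content is to determine which blocks (shape together with flag) and which gluings keep all coefficients in $\{0,\pm1\}$, and to prove that avoiding the six patterns forces precisely these. The proposal does neither.

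The remaining inputs do not close the gap. A description of the support or Newton polytope tells you which monomials occur, not their coefficients. The claim that homogeneous components are signed sums over faces is unsupported. Lascoux's K-theoretic transition is not a two-term recursion but a signed sum over several permutations. You would have to control cancellation between terms, not just disjointness of supports, and show that the permutations it produces stay in the pattern-avoiding class.
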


As research on patterns progressed, it quickly became apparent that extending the original notion of permutation patterns was both necessary and beneficial. The notion of {\it mesh pattern} was the outcome of one such endeavour by Br\"and\'en and Claesson~\cite{BC2011}; see Definition~\ref{def:mesh pattern}. It subsumes several classes of patterns as special cases, such as consecutive, vincular, and bivincular patterns. For undefined terms and further information, see the monograph~\cite{Kit2011} by the third author.

From our perspective, research on mesh patterns primarily focuses on two major classes of problems, and the present paper makes contributions to both. The first class of problems is concerned with the classification. For all $n,k\ge 0$, let $\fS_n(p)$ denote the set of $n$-permutations that avoid a given pattern $p$, and let $s_{n,k}(p)$ be the number of $n$-permutations with $k$ occurrences of $p$. In particular, we see that $s_{n,0}(p)=|\fS_n(p)|$. Given two patterns $p_1$ and $p_2$, we denote by $\fS_n(p_1,p_2)$ the set of $n$-permutations that avoid simultaneously both patterns $p_1$ and $p_2$. We say that $p_1$ and $p_2$ are \emph{Wilf-equivalent} if $|\fS_n(p_1)|=|\fS_n(p_2)|$ for all $n\ge 0$. They are said to be \emph{equidistributed}, if the condition $s_{n,k}(p_1)=s_{n,k}(p_2)$ holds for all $n,k\ge 0$, in which case we write $p_1\sim p_2$. If we use $p(\sigma)$ to denote the number of occurrences of the pattern $p$ in a permutation $\sigma$, then the relation $p_1\sim p_2$ can be rephrased in terms of generating functions as 
\begin{align*}
  \sum_{n\ge 0}t^n\sum_{\sigma\in\fS_n}q^{p_1(\sigma)} &= \sum_{n\ge 0}t^n\sum_{\sigma\in\fS_n}q^{p_2(\sigma)}.
\end{align*}

A classification of mesh patterns of length $2$ was initiated by Hilmarsson et al.~\cite{Hilmarsson2015Wilf}, who showed that the number of Wilf-equivalence classes for length-$2$ mesh patterns, denoted here by $\wilf$, is at most $56$, and conjectured that $\wilf=46$. This upper bound was improved to $\wilf \le 49$ in a recent work by Su, Kitaev, and Zhang~\cite{SKZ2026}. The same work~\cite{SKZ2026} also provided a nearly complete classification of equidistribution equivalence classes for mesh patterns of length $2$. The authors showed that the number of such classes, denoted here by $\dist$, is at most $108$, and conjectured that $\dist=105$. Interestingly, the patterns involved in the conjectured equivalences are the same as those appearing in the conjectured Wilf-equivalences, so that proving any of the equidistribution conjectures automatically implies the corresponding Wilf-equivalence conjectures.  

In the present work, we further reduce these upper bounds to $\wilf \le 47$ and $\dist \le 106$, respectively, by resolving a 2019 conjecture from~\cite{KZ2019} and a recent conjecture from~\cite{SKZ2026}. Unless otherwise noted, the class numbers used throughout the paper are those catalogued in~\cite{SKZ2026}.

\begin{thm}\label{thm:classification}
For mesh patterns of length $2$, the number $\dist$ of equidistribution equivalence classes satisfies $105 \le \dist \le 106$. 
\end{thm}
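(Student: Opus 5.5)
The lower bound $105\le\dist$ needs no new work. It is already established in~\cite{SKZ2026}: for each pair of length-$2$ mesh patterns placed in distinct classes of the catalogue there, an explicit small $n$ and $k$ are exhibited with $s_{n,k}(p_1)\neq s_{n,k}(p_2)$. In fact $\dist\ge 105$ can be read off from the computed distributions for $n\le 8$ or so. We simply cite this, noting that refining the partition can only increase the class count, so it remains valid.

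For the upper bound, we start from the bound $\dist\le 108$ of~\cite{SKZ2026}. That partition into 108 candidate classes is known to be a coarsening-free description: two patterns in the same candidate class are proven equidistributed. The only open questions are three conjectured merges of pairs of candidate classes. Each proven merge lowers the count by one, so it suffices to establish two of the three conjectured equidistributions.

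\textbf{First merge (the 2019 conjecture of~\cite{KZ2019}).} Let $p_1,p_2$ be the two length-$2$ mesh patterns involved. I would build an involution $\Phi$ on $\fS_n$ with $p_1(\sigma)=p_2(\Phi(\sigma))$. Since both patterns have underlying classical pattern $12$ or $21$, an occurrence is a pair of entries together with emptiness conditions on shaded boxes. I would decompose $\sigma$ by its left-to-right maxima (or right-to-left minima, depending on the shading). The shaded regions then force occurrences to involve these records and the entries immediately adjacent to them. $\Phi$ would rearrange the blocks between consecutive records, for instance by reversing or complementing each block, so that the boxes counted by $p_1$ are exchanged with those counted by $p_2$. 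It must also preserve the record skeleton, which makes $\Phi$ an involution.

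\textbf{Second merge (the recent conjecture of~\cite{SKZ2026}).} Here I would construct a bijection $\Psi$, not necessarily an involution, again with $p_1(\sigma)=p_2(\Psi(\sigma))$. I would proceed recursively on $n$, inserting the entry $n$ (or $1$) into the permutation and tracking how many occurrences each insertion site creates for either pattern. If the multisets of increments agree at each step, matching sites with equal increments inductively yields $\Psi$. As a backup, one can derive a functional equation for the bivariate generating function of each pattern from the same insertion decomposition and check that the two coincide.

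Together these give $\dist\le 106$.

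\textbf{Main obstacle.} The hard part is verifying that the constructed maps genuinely transfer occurrence counts. Shaded boxes on the boundary of the mesh interact with entries far away in the permutation, so local block operations may change the emptiness conditions elsewhere. I would first test candidate maps by computer for $n\le 8$ before proving the occurrence-count invariance case by case on the position of the pair relative to the records.
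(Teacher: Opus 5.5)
Your reduction matches the paper's: take $105\le\dist\le 108$ from \cite{SKZ2026} and settle two of the three open merges. The paper settles Class~54 (Conjecture~2 of \cite{SKZ2026}) and Class~71 (from \cite{KZ2019}), leaving Class~69. The gap is that the whole content of the theorem lies in those two equidistributions, and you establish neither. You never identify the patterns, never define $\Phi$ or $\Psi$, and postpone the transfer of occurrence counts to computer experiments.

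Two of your heuristics also fail as stated.
\begin{itemize}
\item Preserving the record skeleton does not make a block-rearranging map an involution. The bijection the paper uses in the record-based case is not one (e.g.\ $2143\mapsto 4213\mapsto 1243$).
\item The insertion scheme gives a bijection only if you prove inductively that $\sigma'$ and $\Psi(\sigma')$ have the same multiset of \emph{net} changes over all insertion sites, at every level. That requires an auxiliary statistic carried along with the pattern count (compare the admissible positions in Theorem~\ref{thm:class75}). Moreover, shaded boxes let an insertion destroy occurrences as well as create them, so counting created occurrences is not enough.
\end{itemize}

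Here is what is missing concretely. For Class~71, let $p_3$ have shaded boxes $(0,0),(1,2),(2,1),(2,2)$ and $p_4$ have $(0,2),(1,0),(2,1),(2,2)$. The northeast shading forces $b$ to be a right-to-left maximum $r_j$. Write $\sigma=N_1M_1r_1\cdots N_{l-1}M_{l-1}r_{l-1}N_lr_l$, where $M_i$ is the longest factor immediately preceding $r_i$ whose letters lie below $r_{i+1}$. Then $(a,r_j)$ is a $p_3$-occurrence iff $a\in N_j$, $a>r_{j+1}$, and $a$ is smaller than every letter to its left. So $a$ need not be adjacent to a record.

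For $p_4$, every letter to the left of $a$ must lie below $b=r_k$, so $a$ must precede $r_1,\dots,r_{k-1}$. This is why the paper's $\psi(\sigma)=N_l^{\rr}\cdots N_1^{\rr}\,r_1M_1r_2M_2\cdots r_{l-1}M_{l-1}r_l$ moves the reversed $N$-blocks in front of $r_1$. The letters to the left of $a$ in $\sigma$ then become exactly the letters between $a$ and $r_j$ in $\psi(\sigma)$, which the $(1,0)$ shading of $p_4$ controls. Preserving $\RLM$ makes $\psi$ invertible. Reversing blocks in place cannot achieve this.

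For Class~54, let $p_1$ have shaded boxes $(0,0),(0,1),(1,1),(2,0),(2,2)$ and $p_2$ have $(0,1),(0,2),(1,1),(2,0),(2,2)$. The key fact is rigidity: all occurrences of $p_1$ and $p_2$ in $\sigma$ share one first letter $a$. If both patterns occur, then $a=\sigma_1$ and their occurrence sets coincide. Otherwise, let $m$ be the minimum of the letters left of $a$. Swapping the value intervals $[a,m)$ and $[m,n]$ (when $m>a$), or $[m,a)$ and $[a,n]$ (when $m<a$), is an involution. It exchanges the positions of second letters of $p_1$-occurrences with those of $p_2$-occurrences.

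Without these constructions or working substitutes, your argument proves only $105\le\dist\le 108$.
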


\begin{remark}\label{class-69-remark-equivalence} The only remaining conjectural equidistributed class is Class $69$ comprising of the patterns in the set $\{\pattern{scale=0.5}{2}{1/1,2/2}{1/2,1/1,2/1,0/0},\pattern{scale=0.5}{2}{1/1,2/2}{2/2,0/1,1/1,1/0},\pattern{scale=0.5}{2}{1/1,2/2}{0/2,1/1,2/1,1/0},\pattern{scale=0.5}{2}{1/1,2/2}{1/2,0/1,1/1,2/0} \}$, where the first two patterns, as well as the last two patterns, are trivially equivalent.
\end{remark}

Since conjectured equidistributions imply the corresponding Wilf-equivalences, we immediately obtain the following improvement on the value of $\wilf$.

\begin{coro}
For mesh patterns of length $2$, the number $\wilf$ of Wilf-equivalence classes satisfies $46 \le \wilf \le 47$.
\end{coro}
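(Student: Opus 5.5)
The corollary should follow from Theorem~\ref{thm:classification} by reading off the known Wilf-equivalence data, without any new combinatorics. The starting point is the observation recalled above from~\cite{SKZ2026}: equidistribution implies Wilf-equivalence, since $s_{n,0}(p)=|\fS_n(p)|$. Hence every distribution-equivalence class lies inside a single Wilf-equivalence class. I would therefore organise the argument through the catalogue of~\cite{SKZ2026}, which gives $\wilf\le 49$ and $\dist\le 108$. In that catalogue, each conjectured Wilf-equivalence between two candidate Wilf classes is witnessed by a conjectured equidistribution between the corresponding candidate distribution classes.

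For the upper bound, I would identify the two conjectures resolved in this paper: the 2019 conjecture of~\cite{KZ2019} and the recent conjecture of~\cite{SKZ2026}. These are exactly the equidistributions that lower $\dist$ from $108$ to $106$ in Theorem~\ref{thm:classification}. Each one merges two candidate distribution classes, and both patterns involved appear in the list of conjectured Wilf-equivalences. Consequently each merger also merges two of the $49$ candidate Wilf classes into one, giving $\wilf\le 49-2=47$.

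The one point needing care is to check that these two merges are genuinely distinct at the Wilf level. That is, the two pairs of merged distribution classes must not already sit inside the same candidate Wilf class, and they must not involve the same pair of Wilf classes. This is verified by inspecting the class numbers in the catalogue of~\cite{SKZ2026}: the patterns in each resolved conjecture belong to different candidate Wilf classes there, and the two conjectures concern disjoint sets of such classes.

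For the lower bound $46\le\wilf$, I would invoke the result of Hilmarsson et al.~\cite{Hilmarsson2015Wilf}, reconfirmed in~\cite{SKZ2026}. They show that at least $46$ Wilf classes are pairwise distinct, by exhibiting differing avoidance counts for small $n$. Combining this with the upper bound gives $46\le\wilf\le 47$. Remark~\ref{class-69-remark-equivalence} explains that the remaining gap is precisely Class~$69$.
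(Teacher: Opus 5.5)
Your proposal is correct and is essentially the paper's argument. Theorems~\ref{thm:class54-equi} and~\ref{thm:class71-equi} settle two of the conjectured equidistributions catalogued in~\cite{SKZ2026}, and these involve exactly the patterns of the conjectured Wilf-equivalences there, so each also settles one Wilf-equivalence, lowering the bound from $49$ to $47$, while $46$ is the previously known lower bound; the paper leaves the distinctness check you flag implicit in its introductory remark that the two sets of conjectures involve the same patterns.
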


\begin{remark}\label{class-69-remark-Wilf-equivalence}
The only remaining conjectural Wilf-equivalence class is Class $69$ given in Remark~\ref{class-69-remark-equivalence}.
\end{remark}

Building on the work of \cite{SKZ2026}, the proof of Theorem~\ref{thm:classification} reduces to two equidistribution results, stated and proved in Section~\ref{sec:two_equidistributions}.

The second class of problems frequently raised for mesh patterns (or rather for any kind of patterns in general) is enumerative in nature. Namely, for a fixed (mesh) pattern $p$, one would like to compute the bivariate generating function
$$F^p(t,q) := \sum_{n,k\geq 0}s_{n,k}(p) q^kt^n.$$

For simpler notation, we set $A^p(t):=F^p(t,0)$, $F(t):=F^p(t,1)=\sum_{n\ge 0}n! t^n$, and 
\begin{align*}
F^p_{\geq 1}(t,q) &:= F^p(t,q)-A^p(t)=\sum_{n\geq 0,\, k\ge 1}s_{n,k}(p)q^k t^n.
\end{align*}
% Moreover, we use $A(x)$ (resp., $F(x,q)$) to denote the ordinary generating function for permutations avoiding the pattern in question (resp., the generating function for the distribution of occurrences of the pattern). Hence, if $p(\pi)$ denotes the number of occurrences of a pattern $p$ in a permutation $\pi$, then
%In what follows, we abbreviate ``generating function'' as g.f., 

Note that the superscript ``$p$'' is usually dropped when the pattern is understood, and we use $[t^n]F(t,q)$ to denote the coefficient of $t^n$ in $F(t,q)$. The following three theorems provide three of the previously unknown distributions listed in \cite{SKZ2026}. To be precise, we obtain the distributions for any pattern $p$ in Classes 54, 60, and 75, respectively.

\begin{thm}\label{thm:class54}
For any pattern $p$ in Class $54$, namely, for
$$
p\in \{\pattern{scale=0.6}{2}{1/1,2/2}{0/2,2/2,1/1,2/1,0/0}, 
            \pattern{scale=0.6}{2}{1/1,2/2}{0/2,2/2,1/1,0/0,1/0}, 
            \pattern{scale=0.6}{2}{1/1,2/2}{1/2,2/2,1/1,0/0,2/0}, 
            \pattern{scale=0.6}{2}{1/1,2/2}{2/2,0/1,1/1,0/0,2/0},
            \pattern{scale=0.6}{2}{1/1,2/2}{0/2,2/2,0/1,1/1,2/0}, 
            \pattern{scale=0.6}{2}{1/1,2/2}{0/2,1/2,1/1,0/0,2/0}, 
            \pattern{scale=0.6}{2}{1/1,2/2}{0/2,1/1,2/1,0/0,2/0}, 
            \pattern{scale=0.6}{2}{1/1,2/2}{0/2,2/2,1/1,1/0,2/0}\},$$
we have $A(t)=F(t)-F_{\geq 1}(t, 1)$ and
\begin{align}\label{class54-distr}
F_{\geq 1}(t,q) &= \sum_{n\geq 0}t^nq\left[\sum_{\substack{i, j, k, m\geq 0\\ i+j+k+m=n-2}} \binom{j+k}{k}(i+j)! T_m S_k\right.\nonumber\\
&\left.+\sum_{\substack{i, j_1, j_2, k_1, k_2, m\geq 0\\ i+j_1+j_2+k_1+k_2+m=n-4}}\binom{k_2+j_2+1}{k_2}\binom{k_1+j_1}{k_1}k_1!(i+j_1+j_2+1)! T_m S_{k_2}\right],
\end{align}
where
\begin{align*}
T_m:=T_m(q)=[t^m]\frac{F(t)}{1+t(1-q)F(t)}\text{ and }S_k:=[t^k]\frac{F(t)}{1+tF(t)}.
\end{align*}
\end{thm}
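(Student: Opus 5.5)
We first reduce to a single representative. The eight patterns in Class~$54$ are already shown to be equidistributed in \cite{SKZ2026}, so it suffices to compute $F^p(t,q)$ for one of them. We take $p=\pattern{scale=0.5}{2}{1/1,2/2}{0/2,2/2,1/1,2/1,0/0}$. Let $\sigma_a<\sigma_b$ with $a<b$ be an occurrence. Reading off the shaded boxes, the occurrence condition says:
\begin{itemize}
\item every entry to the left of position $a$ has value in $(\sigma_a,\sigma_b)$;
\item every entry to the right of position $b$ has value below $\sigma_a$;
\item every entry strictly between positions $a$ and $b$ has value below $\sigma_a$ or above $\sigma_b$.
\end{itemize}
Hence every entry larger than $\sigma_b$ lies between $a$ and $b$. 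So $\sigma = L\,\sigma_a\,M\,\sigma_b\,R$, where $L$ uses the values in $(\sigma_a,\sigma_b)$, $R$ uses values below $\sigma_a$, and $M$ uses values below $\sigma_a$ together with all values above $\sigma_b$. The identity $A(t)=F(t)-F_{\ge1}(t,1)$ is immediate, so the real task is \eqref{class54-distr}.

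The first step is to fix a canonical occurrence, say the one with $a$ leftmost (equivalently $\sigma_b$ largest), and show that the remaining occurrences are controlled by a sub-structure. Our guess is that this sub-structure is the suffix block counted by $T_m$. Note that $F/(1+t(1-q)F)$ specialises to $F/(1+tF)$ at $q=0$ and to $F$ at $q=1$. This suggests that $T_m$ is the generating function, in $q$, of $m$-permutations in which $q$ marks the number of ``cut'' positions of some simple kind, such as a left-to-right minimum that also splits the permutation into a direct-sum-like decomposition. Likewise $S_k$ counts permutations with no such cut. We will verify both interpretations first, by a standard first-return decomposition: writing $G=F/(1+t(1-q)F)$ gives $F=G+t(1-q)FG$, that is, $F=G\,(1+tF)-qtFG$, and this identity should be realisable combinatorially.

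The second step is the case split visible in \eqref{class54-distr}, according to whether $M$ is simple or splits further. We guess that the first sum is the case where the entries of $M$ above $\sigma_b$ and below $\sigma_a$ do not interleave in an obstructing way. There we pick a block of $j+k$ positions, of which the $k$ non-occurrence-creating ones form an $S_k$-structure, shuffled in $\binom{j+k}{k}$ ways. A free block of size $i+j$ contributes $(i+j)!$, and the tail $R$ together with the further occurrences gives $T_m$. The second sum, with $n-4$, corresponds to two additional distinguished entries, for instance the maximum of $M$ and a specific small entry. The choices there are: a freely arranged part of size $k_1$ (contributing $k_1!$), interleavings $\binom{k_1+j_1}{k_1}$ and $\binom{k_2+j_2+1}{k_2}$, a free block of size $i+j_1+j_2+1$, and again $T_m$ and $S_{k_2}$. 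For each case we will set up an explicit bijection between permutations with a canonical occurrence and tuples of the indicated data, checking that the parameters range exactly over $i,j,k,m\ge0$ and $i,j_1,j_2,k_1,k_2,m\ge0$ respectively.

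We expect the main obstacle to be the second step: showing that, once the canonical occurrence is fixed, every other occurrence lives entirely in the $T_m$ part, and that no double counting arises between the two cases. Our first attempt will be a direct lemma showing that any two occurrences $(a,b)$ and $(a',b')$ are nested with $a<a'$, $b'>b$ and $\sigma_{b'}<\sigma_a$, so that the extra occurrences form a chain inside $R$. Such a chain is exactly what the factor $q$ inside $T_m$ records. Finally, we will check the formula against a computer enumeration of $s_{n,k}(p)$ for $n\le 8$.
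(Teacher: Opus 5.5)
Your proposal is mostly a list of guesses, and the one structural lemma it commits to (the nesting lemma) is false for the pattern you chose. A correction first: \cite{SKZ2026} only conjectured that the eight patterns of Class~$54$ are equidistributed. The result is Theorem~\ref{thm:class54-equi} of this paper, obtained from the involution $\phi$. Your pattern is the reverse-complement of $p_1$, so it lies in the other subclass from $p_5=p_2$, which is the pattern the paper actually counts.

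For your pattern, all occurrences share the \emph{same second entry}; this is Lemma~\ref{lem:unique-a} for $p_1$ transported by reverse-complement. Directly: suppose $\sigma_b<\sigma_{b'}$. Then $b'$ lies strictly between $a$ and $b$, so $\sigma_b$ is to the right of $b'$, which forces $\sigma_b<\sigma_{a'}$. Hence $a'$ also lies between $a$ and $b$, and $\sigma_a<\sigma_{a'}$ sits to the left of $a'$, violating the condition for $(a',b')$. For example, $\sigma=3241$ has exactly the occurrences $(3,4)$ and $(2,4)$, with $b'=b$.

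So further occurrences arise by moving the \emph{first} entry, and never into $R$. Suppose you take the occurrence with $\sigma_a$ smallest as canonical. Then the other occurrences are exactly the entries of the prefix $L$ that are smaller than everything in $L$ before them and larger than everything in $L$ after them. These are the occurrences of \pattern{scale=0.5}{1}{1/1}{0/0,1/1} in $L$. By Lemma~\ref{lem:short} (which is the interpretation of $T_m$ and $S_k$ you were guessing at), the prefix $L$ contributes $T_{|L|}$. With your choice of $a$ leftmost, the extra first entries instead lie inside $M$, and whether they are valid depends on $R$.

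Even after this repair, the summands of \eqref{class54-distr} do not come out of your blocks. With $\sigma_a$ minimal, an entry $x'<\sigma_a$ of $M$ gives a further occurrence if and only if all four of the following hold:
\begin{enumerate}
\item the small entries of $M$ before $x'$ are larger than $x'$;
\item the small entries of $M$ after $x'$ are smaller than $x'$;
\item no entry of $M$ above $\sigma_b$ precedes $x'$;
\item every entry of $R$ is below $x'$.
\end{enumerate}
The last condition couples $R$ to $M$, so the small entries cannot supply a free factor like $(i+j)!$.

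Concretely, take $n=4$ and $L=\varnothing$, and let $R$, the large part of $M$, and the small part of $M$ play the roles of $A$, $B$, $C$. Block sizes $(1,1,0)$, $(1,0,1)$, $(0,1,1)$ give one permutation each, namely $2431$, $3142$, $2413$. The corresponding summands $\binom{j+k}{k}(i+j)!\,S_k$ are $2$, $0$, $0$. Only the total, $8$, agrees once the single term of the second sum is added. So a term-by-term bijection for this pattern would essentially have to re-derive $\phi$.

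The missing idea is to decompose a pattern from the other subclass, as the paper does with $p_5$. There the entries left of $a$ lie below $\sigma_a$ and constrain nothing, so the values in $A\cup B$ are permuted freely, giving $(i+j)!$. The extra occurrences $(a,d)$ have $d$ in the suffix $D$ of values in $(\sigma_a,\sigma_b)$; this is where your suffix-chain intuition is correct. Choosing $b$ leftmost, a would-be earlier occurrence is an occurrence $c$ of \pattern{scale=0.5}{1}{1/1}{0/0,1/1} in the large block $C$ with no entry of $B$ to its right. Splitting on whether such a $c$ exists, and taking the rightmost one, forces $B_2\neq\varnothing$. This yields exactly the two sums. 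The $n-4$ accounts for $a$, $b$, $c$ and one forced entry of $B_2$. The factor $k_1!$ counts the unconstrained part of $C$ before $c$, not a maximum of $M$.
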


\begin{thm}\label{thm:class60}
For any pattern $p$ in Class $60$, namely, for
$$
p\in\{\pattern{scale=0.6}{2}{1/1,2/2}{0/2,1/2,2/2,1/1,0/0},\pattern{scale=0.6}{2}{1/1,2/2}{0/2,2/2,0/1,1/1,0/0},\pattern{scale=0.6}{2}{1/1,2/2}{2/2,1/1,2/1,0/0,2/0},\pattern{scale=0.6}{2}{1/1,2/2}{2/2,1/1,0/0,1/0,2/0},\pattern{scale=0.6}{2}{1/1,2/2}{0/2,1/2,0/1,2/1,0/0},\pattern{scale=0.6}{2}{1/1,2/2}{0/2,1/2,2/2,0/1,1/0},\pattern{scale=0.6}{2}{1/1,2/2}{2/2,0/1,2/1,1/0,2/0},\pattern{scale=0.6}{2}{1/1,2/2}{1/2,2/1,0/0,1/0,2/0}\},$$
we have $A(t)=F(t,0)$ and for $n\ge 1$,
\begin{equation}\label{class60-distr}
[t^n] F(t,q)=\sum_{m=0}^{n-1}\frac{(n-1)!}{m!}T_m,
\end{equation}
where $T_m$ is defined in Theorem~\ref{thm:class54}.
\end{thm}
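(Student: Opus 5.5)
The plan is to prove \eqref{class60-distr} directly for the first pattern $p=\pattern{scale=0.5}{2}{1/1,2/2}{0/2,1/2,2/2,1/1,0/0}$. The rest of Class $60$ would then follow because, as catalogued in~\cite{SKZ2026}, Class $60$ is an equidistribution class: its members are either trivially equivalent to $p$ (via the reverse, complement and inverse symmetries, possibly composed with the shading lemma) or were already proved equidistributed with $p$ there. I would state this reduction first and then work only with $p$.

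\textbf{Step 1: reading off occurrences.} An occurrence of $p$ is a pair of positions $i<j$ with $\sigma_i<\sigma_j$, subject to the shaded boxes:
\begin{itemize}
\item The shaded top row forces $\sigma_j=n$.
\item The shaded box $(0,0)$ forces $\sigma_i$ to be a left-to-right minimum.
\item The shaded box $(1,1)$, together with $\sigma_j=n$, forces every entry strictly between positions $i$ and $j$ to be smaller than $\sigma_i$.
\end{itemize}
Write $\sigma=\pi\, n\, \tau$ with $|\pi|=m$. Then the occurrences are exactly the positions $i\le m$ with $\pi_1,\dots,\pi_{i-1}>\pi_i>\pi_{i+1},\dots,\pi_m$. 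Equivalently, $\pi_i=m-i+1$ after standardization, and $\{\pi_1,\dots,\pi_{i-1}\}$ are the $i-1$ largest values of $\pi$. So $\pi=\alpha\ominus 1\ominus\beta$ with the singleton $1$ at position $i$.

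Call such a position a \emph{skew singleton point} of $\pi$, and let $\mathrm{ss}(\pi)$ be their number. The statistic $p(\sigma)$ equals $\mathrm{ss}$ of the standardized $\pi$. In particular it does not depend on $\tau$ or on which values go into $\pi$.

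\textbf{Step 2: factoring out the suffix.} Fix the position $m+1$ of $n$, for $0\le m\le n-1$. The permutation $\sigma$ is then determined by:
\begin{itemize}
\item a choice of the $m$ values used in $\pi$, in $\binom{n-1}{m}$ ways;
\item an arrangement of the remaining $n-1-m$ values in $\tau$, in $(n-1-m)!$ ways;
\item the standardized prefix $\mathrm{st}(\pi)\in\fS_m$.
\end{itemize}
The first two factors multiply to $(n-1)!/m!$. Hence $[t^n]F(t,q)=\sum_{m=0}^{n-1}\frac{(n-1)!}{m!}\sum_{\pi\in\fS_m}q^{\mathrm{ss}(\pi)}$. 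It remains to show $\sum_{\pi\in\fS_m}q^{\mathrm{ss}(\pi)}=T_m$.

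\textbf{Step 3: the key identity.} I would prove this by the standard ``$q=(q-1)+1$'' marking argument. Expanding $q^{\mathrm{ss}(\pi)}=\sum_{S}(q-1)^{|S|}$ over subsets $S$ of skew singleton points gives a sum over pairs $(\pi,S)$.

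A pair $(\pi,S)$ with $|S|=k$ corresponds bijectively to a sequence $\gamma_0,\gamma_1,\dots,\gamma_k$ of arbitrary (possibly empty) permutations, via
$$\pi=\gamma_0\ominus 1\ominus\gamma_1\ominus 1\ominus\cdots\ominus 1\ominus\gamma_k.$$
The point to verify is that the marked singletons are exactly skew singleton points. This holds because in any skew sum $\alpha\ominus1\ominus\beta$ the middle entry is such a point, regardless of $\alpha$ and $\beta$.

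Summing over all such sequences gives
$$\sum_{k\ge0}(q-1)^k t^k F(t)^{k+1}=\frac{F(t)}{1+t(1-q)F(t)},$$
whose coefficient of $t^m$ is $T_m$.

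Finally, the avoidance statement $A(t)=F(t,0)$ is immediate by setting $q=0$.

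\textbf{Main obstacle.} The only real subtlety is Step 1, namely translating the shaded boxes correctly, including the boundary cases: $i=1$, $m=0$, and an empty region between positions $i$ and $j$. A second point needing care is confirming from~\cite{SKZ2026} that all eight listed patterns are indeed already known to be equidistributed with $p$, so that no new bijection is needed for them.
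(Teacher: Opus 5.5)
Your proof is correct and is essentially the paper's argument in mirror image. The paper uses the representative $p_6$ (the fifth pattern), where the fully shaded column $0$ forces $a=\sigma_1$ and occurrences correspond to occurrences of a length-$1$ mesh pattern in the subword of entries larger than $a$. You use the first pattern, where the shaded top row forces $b=n$ and occurrences correspond to skew singleton points of the prefix before $n$. In both cases the factor $\binom{n-1}{m}(n-1-m)!=(n-1)!/m!$ splits off and $T_m$ remains.

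The only substantive difference is how $T_m$ is obtained. You prove $\sum_{\pi\in\fS_m}q^{\mathrm{ss}(\pi)}=T_m$ directly by the $q=(q-1)+1$ skew-sum marking argument, whereas the paper cites Lemma~\ref{lem:short}; your $\mathrm{ss}$ counts exactly the occurrences of the length-$1$ pattern with boxes $(0,0)$ and $(1,1)$ shaded that appears there. Like the paper, you rely on the already established equidistribution of the eight patterns in Class~$60$.
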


\begin{thm}\label{thm:class75}
For any pattern $p$ in Class $75$, namely, for
$$
p\in\{\pattern{scale=0.6}{2}{1/1,2/2}{1/2,2/2,0/1}, 
\pattern{scale=0.6}{2}{1/1,2/2}{1/2,0/1,0/0},
\pattern{scale=0.6}{2}{1/1,2/2}{2/2,2/1,1/0}, 
\pattern{scale=0.6}{2}{1/1,2/2}{2/1,1/0,0/0},
\pattern{scale=0.6}{2}{1/1,2/2}{2/2,0/1,1/1},
\pattern{scale=0.6}{2}{1/1,2/2}{1/2,1/1,0/0},
\pattern{scale=0.6}{2}{1/1,2/2}{1/1,2/1,0/0},
\pattern{scale=0.6}{2}{1/1,2/2}{2/2,1/1,1/0},
\pattern{scale=0.6}{2}{1/1,2/2}{1/2,2/2,0/0},
\pattern{scale=0.6}{2}{1/1,2/2}{2/2,0/1,0/0},
\pattern{scale=0.6}{2}{1/1,2/2}{2/2,2/1,0/0},
\pattern{scale=0.6}{2}{1/1,2/2}{2/2,0/0,1/0}\},
$$
$s_{n,0}(p)=1$ for all $n\geq 0$, and $s_{0,k}(p)=0$ for all $k\geq 1$. Moreover, for $n>0$ and $k\geq 0$, 
\begin{align}\label{thm:class75-rec-1}
s_{n,k}(p)
&=
n s_{n-1,k}(p)-\sum_{h=1}^{n-1} h \cdot s_{n-1,k,h}(p)
+\sum_{h=1}^{n-1} h \cdot s_{n-1,k-1,h}(p).
\end{align}
Here, $s_{n,k,h}(p)$ denotes the number of $n$-permutations with exactly $k$ occurrences of $p$ and exactly $h$ admissible positions (see Definition~\ref{def:admissible pos}). These numbers satisfy
\begin{align}
s_{n,k,1}(p) &= s_{n,k}(p) - \sum_{h=2}^{n} s_{n,k,h}(p), \label{thm:class75-rec-2} \\
s_{n,k,h}(p) &= \sum_{i=1}^{n-1}\sum_{a=0}^{k}s_{i,a,1}(p)\cdot s_{n-i,k-a,h-1}(p),  \text{ for } h\geq 2. \label{thm:class75-rec-3}
\end{align}
The initial conditions are $s_{0,0,0}(p)=1$, and $s_{0,k,h}(p)=0$ whenever $(k,h)\neq(0,0)$, while for $n\geq 1$, we have $s_{n,k,h}(p)=0$ whenever $k\not\in [0,n-1]$, or $h\not\in [1,n]$.
\end{thm}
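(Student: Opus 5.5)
The plan is to work with a single convenient representative of Class $75$ and prove the three recurrences for it by a generating-tree argument. Within a class all patterns are equidistributed (this is the classification of \cite{SKZ2026}, and the trivial symmetries of the square cover most members). So it suffices to treat, say, $p=\pattern{scale=0.5}{2}{1/1,2/2}{1/2,2/2,0/1}$, or whichever symmetric image makes the insertion operation below cleanest.

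For this $p$, an occurrence is a pair $i<j$ with $\sigma_i<\sigma_j$ satisfying two conditions. First, $\sigma_j$ is the largest entry among $\sigma_{i+1}\ldots\sigma_n$; this is the shading of boxes $(1,2),(2,2)$. Second, no entry to the left of position $i$ has value in $(\sigma_i,\sigma_j)$; this is the shading of box $(0,1)$.

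First I will check $s_{n,0}(p)=1$. Every non-decreasing-free configuration produces an occurrence: take any ascent-type pair and push $j$ to the right-maximum, and $i$ to a suitable earlier position. Hence the only avoider is the decreasing permutation $n(n-1)\cdots 1$. The value $s_{0,k}=0$ for $k\ge1$ is trivial.

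\textbf{Step 1: proving \eqref{thm:class75-rec-1}.} I will generate each $n$-permutation uniquely from an $(n-1)$-permutation $\tau$ by inserting one new entry into one of $n$ slots. The natural choice is to insert the new minimum $1$ (after standardising), or to use a rotated representative and insert a new maximum. The key lemma to prove is that each slot changes the number of occurrences by either $0$ or exactly $+1$. The $h$ slots giving $+1$ are exactly the \emph{admissible positions} of $\tau$ in the sense of Definition~\ref{def:admissible pos}. Granting this lemma, summing over $\tau$ with $k$ occurrences and $h$ admissible positions gives $(n-h)$ permutations with $k$ occurrences. Summing over $\tau$ with $k-1$ occurrences gives $h$ permutations with $k$ occurrences. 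This yields \eqref{thm:class75-rec-1}, since $\sum_h (n-h)s_{n-1,k,h}=n s_{n-1,k}-\sum_h h\,s_{n-1,k,h}$. The main obstacle is this lemma. One has to verify that inserting an entry in a non-admissible slot neither creates nor destroys occurrences, because the new entry can break the maximality or gap conditions of existing occurrences. One also has to verify that in an admissible slot exactly one new occurrence appears and none is destroyed. I expect this to require a careful case analysis on where the slot lies relative to the right-to-left maxima of $\tau$ and to the left-to-right maxima of its prefix.

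\textbf{Step 2: proving \eqref{thm:class75-rec-2} and the initial conditions.} Here I show that every $n$-permutation with $n\ge1$ has between $1$ and $n$ admissible positions. The final slot, or the corresponding extremal slot, is always admissible, which gives $h\ge1$. Then \eqref{thm:class75-rec-2} is just the partition of $s_{n,k}$ by $h$. The bound $k\le n-1$ follows by induction from Step 1, since each insertion adds at most one occurrence.

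\textbf{Step 3: proving \eqref{thm:class75-rec-3}.} I will establish a unique factorisation of any permutation with $h\ge2$ admissible positions as a sum-type concatenation $\sigma=\alpha\oplus\beta$ (or $\ominus$, depending on the representative). Here $\alpha$ is the first block, of size $i\in[1,n-1]$, and has exactly one admissible position, while $\beta$ has $h-1$. The shaded boxes should forbid occurrences straddling the two blocks, so that occurrence counts add. I will also show that the admissible positions of $\sigma$ are those of $\beta$ together with exactly one contributed by $\alpha$. The bijection $(\alpha,\beta)\mapsto\sigma$ then gives the convolution \eqref{thm:class75-rec-3}. The delicate point is identifying the cut: I plan to take it just after the first admissible position, and to prove that the prefix up to that point is closed under the block structure.
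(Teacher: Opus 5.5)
Your outline matches the paper's proof: insert a new minimum $1$ into $\sigma'\in\fS_{n-1}$, let admissible slots add exactly one occurrence, and split off a first block to get the convolution. But the proposal never unpacks Definition~\ref{def:admissible pos}, and that is the missing idea.

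The old letters keep their relative order, and the new $1$, being smallest, can only be the middle letter of a new $213$. Hence position $i$ is admissible if and only if every letter of $\sigma_1\cdots\sigma_{i-1}$ exceeds every letter of $\sigma_i\cdots\sigma_n$. With this, your ``main obstacle'' needs no case analysis on right-to-left or left-to-right maxima:
\begin{itemize}
\item The new $1$ lies below every $a$, so it never sits in a shaded box of an existing occurrence. Nothing is destroyed.
\item It can only be the smaller dot, and its partner is forced to be the maximum $b$ of the letters to its right.
\item Box $(0,1)$ is empty exactly when everything to its left exceeds $b$, i.e.\ exactly at admissible slots.
\item The slot after $\sigma_n$ is not one of the $n$ candidate positions, and inserting there creates nothing.
\end{itemize}

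Without this characterization, the concrete claims you make in Steps~2 and~3 are wrong for your chosen representative.
\begin{itemize}
\item In Step~2, it is the \emph{first} slot ($i=1$, empty prefix) that is always admissible, not the final one. Counting the final slot as admissible would contradict your own Step~1 lemma, since a $1$ inserted at the end creates no occurrence.
\item In Step~3, the admissible positions are exactly the starting positions of the skew-indecomposable components. So the factorisation must be $\sigma=\alpha\ominus\beta$. With $\oplus$, occurrences straddle the blocks: every left-to-right maximum $a$ of $\alpha$ forms an occurrence $(a,\max\beta)$.
\item The cut goes immediately \emph{before} the second admissible position, i.e.\ at the smallest $i\in[1,n-1]$ with $\sigma_1\cdots\sigma_i>\sigma_{i+1}\cdots\sigma_n$, which is exactly the paper's choice. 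Cutting ``just after the first admissible position'' is wrong, since that position is always $1$.
\end{itemize}
Once these are corrected, two checks become immediate: occurrences add across $\alpha\ominus\beta$, and the admissible positions split as one from $\alpha$ and $h-1$ from $\beta$. Your argument then coincides with the paper's.
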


The proofs of the above three theorems are given in Section~\ref{sec:proofs_of_the_three_distributions}. We remark that the enumeration of the avoiders for Class 75, given by the trivial sequence $1,1,1,\ldots$, was already noted in \cite[Tab.~1]{Hilmarsson2015Wilf}; see also \cite[Thm.~7.3]{SKZ2026} for more patterns that are Wilf-equivalent to Class 75. Moreover, Classes 30 and 60 were shown to be Wilf-equivalent and were included in Class 46 of \cite[Tab.~10]{Hilmarsson2015Wilf}. Our Theorem~\ref{thm:class60} thus not only provides the distribution for Class 60, but also in effect gives a formula for the avoiders for Class~30. Our contributions are summarized in Table~\ref{tab:summary} below, where ``A'' indicates that the avoidance formula is now known, while ``D'' indicates that the full distribution has been determined.

% In summary, our present work resolves two open avoidance problems from \cite{Hilmarsson2015Wilf} (Wilf-equivalence Classes 46 and 51), replaces three question marks in Table~6 of \cite{SKZ2026} (two by ``D'' and one by ``A'') and one ``A'' by ``D'', and, in particular, solves three of the 72 open distribution problems posed in \cite{SKZ2026}. 

% Moreover, as noted in the caption of Table~6 in \cite{SKZ2026}, the Shading Lemma from \cite{Hilmarsson2015Wilf} implies that Classes 30 and 60 are Wilf-equivalent. Class 30 consists of the patterns in the set $\{\pattern{scale=0.6}{2}{1/1,2/2}{0/2,1/2,2/1,0/0},\pattern{scale=0.6}{2}{1/1,2/2}{0/2,2/2,0/1,1/0},\pattern{scale=0.6}{2}{1/1,2/2}{1/2,2/1,0/0,2/0},\pattern{scale=0.6}{2}{1/1,2/2}{2/2,0/1,1/0,2/0}\}$. Consequently,  also yields the avoidance formula for Class 30. Thus, our results resolve multiple previously unknown entries in Table~6 of \cite{SKZ2026}.

\medskip

\begin{table}[h]
    \renewcommand{\arraystretch}{0.9}
    \centering
\begin{tabular}{|c|c|c|c|}
\hline
Class No.~in \cite{SKZ2026} & Previous status from \cite[Tab.~6]{SKZ2026} & New status & Ref. \\
\hline
30 & ? & A & Thm~\ref{thm:class60}\\
54 & A & D & Thm~\ref{thm:class54} \\
60 & ? & D & Thm~\ref{thm:class60}\\
75 & A & D & Thm~\ref{thm:class75} \\
\hline
\end{tabular}
\caption{A summary of our enumerative contributions.}\label{tab:summary}
\end{table}

% \medskip

% Here,  Also, Class 54 consists of the patterns in the set $\{\pattern{scale=0.6}{2}{1/1,2/2}{0/2,2/2,1/1,2/1,0/0},\pattern{scale=0.6}{2}{1/1,2/2}{0/2,2/2,1/1,0/0,1/0},\pattern{scale=0.6}{2}{1/1,2/2}{1/2,2/2,1/1,0/0,2/0},\pattern{scale=0.6}{2}{1/1,2/2}{2/2,0/1,1/1,0/0,2/0},\pattern{scale=0.6}{2}{1/1,2/2}{0/2,2/2,0/1,1/1,2/0},\pattern{scale=0.6}{2}{1/1,2/2}{0/2,1/2,1/1,0/0,2/0} ,\pattern{scale=0.6}{2}{1/1,2/2}{0/2,1/1,2/1,0/0,2/0},\pattern{scale=0.6}{2}{1/1,2/2}{0/2,2/2,1/1,1/0,2/0}\}$. 

% Additionally, our distribution formula for Class 54 from \cite{SKZ2026}, which also resolved an equidistribution conjecture, simultaneously solves the open avoidance problem for Wilf-equivalence Class 51 in \cite{Hilmarsson2015Wilf}.

%%%%%%%%%%%%%%%%%%%%%%%%%%%%%%%%%%%%%%%%%%%%%%%%%%%%%%%%%%%%%%%%%%%%%%%%%
\section{Two equidistributions}\label{sec:two_equidistributions}
We begin with a formal definition of a mesh pattern.
\begin{defn}\label{def:mesh pattern}
 A \emph{mesh pattern} is a pair $p = (\pi, R)$, where $\pi = \pi_1 \pi_2 \cdots \pi_m \in \mathfrak{S}_m$ and $R \subseteq [0,m] \times [0,m]$. It can be represented pictorially using a grid diagram decorated with $m$ heavy dots and $|R|$ shaded unit cells. The dots are placed at coordinates $(i, \pi_i)$ for $1 \le i \le m$, and a unit cell is shaded if and only if the coordinates pair of its bottom-left corner belongs to $R$.
\end{defn} 

Ignoring the shadings and focusing on the relative heights of the dots recovers the familiar notion of a permutation matrix (see, e.g., \cite[Section~1.5]{EC1}) for the underlying permutation $\pi$. For example, the mesh pattern $p = (132, \{(0,0), (2,3), (3,1), (3,2)\})$ is illustrated below:
$$
\pattern{scale=1}{3}{1/1,2/3,3/2}{0/0,2/3,3/1,3/2}\, .
$$

Here and henceforth, when we say that $(a,b)$ is an occurrence of a mesh pattern of length $2$, $a$ and $b$ refer to the {\em values} of the underlying (classical) pattern (i.e., the two ``dots'' in the pictorial representation of the pattern). Thus, for a mesh pattern with underlying $12$-pattern occurring in a given permutation $\sigma$, we have $a<b$ and $\sigma^{-1}(a)<\sigma^{-1}(b)$. The proof of Theorem~\ref{thm:classification} reduces to Theorems~\ref{thm:class54-equi} (for Class 54) and \ref{thm:class71-equi} (for Class 71), each confirming a previously conjectured equidistribution. Theorem~\ref{thm:classification} then follows, since it is already known from \cite{SKZ2026} that $105 \le \dist \le 108$. 

We begin with Class 54, which, as shown in \cite[Tab.~5]{SKZ2026}, consists of $8$ patterns divided into two subclasses (each subclass contains $4$ patterns). The $4$ patterns within each subclass are trivially equidistributed via elementary operations such as reverse-complementation and inversion. The nontrivial task is to establish distributional equivalence between two patterns coming from different subclasses. Let us consider
\[
p_1=\pattern{scale=0.6}{2}{1/1,2/2}{0/0,0/1,1/1,2/0,2/2}
\quad \text{and} \quad
p_2=\pattern{scale=0.6}{2}{1/1,2/2}{0/2,0/1,1/1,2/0,2/2},
\]
indeed two patterns from separate subclasses. We construct a value-shifting involution $\phi: \mathfrak{S}_n \to \mathfrak{S}_n$, which shows that $p_1$ and $p_2$ are in fact \emph{jointly equidistributed} in the following sense.

\begin{thm}\label{thm:class54-joint equi}
With $p_1$ and $p_2$ as given above and every $n\ge 1$, the pair of patterns $(p_1,p_2)$ is equidistributed with the pair $(p_2,p_1)$ over $\fS_n$. In particular, $p_1\sim p_2$.
\end{thm}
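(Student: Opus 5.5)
The plan is to build an explicit involution $\phi$ on $\fS_n$ with $p_1(\phi(\sigma))=p_2(\sigma)$ and $p_2(\phi(\sigma))=p_1(\sigma)$. Since $\phi$ is an involution, the second identity follows from the first, and the joint statement implies $p_1\sim p_2$.

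\textbf{Step 1: decode the two patterns.} Both patterns shade the cells $(0,1),(1,1),(2,0),(2,2)$. So an occurrence $(a,b)$ of either pattern, with $a$ before $b$ and $a<b$, requires:
\begin{itemize}
\item every entry to the right of $b$ has value in $(a,b)$;
\item no entry to the left of $b$ has value in $(a,b)$.
\end{itemize}
In other words, the values in $(a,b)$ are exactly the entries forming the suffix after $b$. The two patterns differ only in the region to the left of $a$.
\begin{itemize}
\item For $p_1$ the cell $(0,0)$ is shaded, so every entry before $a$ is larger than $b$.
\item For $p_2$ the cell $(0,2)$ is shaded, so every entry before $a$ is smaller than $a$.
\end{itemize}
I will write $\sigma=\alpha\, a\, \beta\, b\, \gamma$. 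Here $\gamma$ is the set of values $(a,b)$ exactly, $\beta$ consists of values outside $[a,b]$, and $\alpha$ lies entirely above $b$ (for $p_1$) or entirely below $a$ (for $p_2$).

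\textbf{Step 2: structure of the set of occurrences.} I would show the following facts.
\begin{itemize}
\item The position of $b$ determines the suffix $\gamma$, and hence forces $a=\min(\gamma\cup\{b\})-1$ and $b=\max(\gamma)+1$ whenever $\gamma\neq\emptyset$. Thus each position of $b$ yields at most one occurrence.
\item The positions of $b$ that give occurrences form a chain of nested suffixes whose value sets are nested intervals.
\end{itemize}
This yields a decomposition of $\sigma$ in which the only freedom distinguishing $p_1$ from $p_2$ is whether the prefix $\alpha$ sits \emph{above} or \emph{below} the interval $[a,b]$.

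\textbf{Step 3: construct $\phi$ as a value shift.} Take the maximal occurrence structure, that is, the outermost nested interval. The interval $[a,b]\cup\gamma$ is a contiguous block of values, and $\alpha$ sits entirely on one side of it. Define $\phi$ by cyclically shifting values: move the value block occupied by the prefix to the other side of the value interval spanned by $a$, $b$ and $\gamma$, while preserving the relative order inside each block. Positions stay unchanged. In this way, an occurrence of $p_1$ (prefix above) becomes an occurrence of $p_2$ (prefix below), and conversely.

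Doing this shift layer by layer along the nested chain from Step 2 should convert every $p_1$-occurrence into a $p_2$-occurrence and every $p_2$-occurrence into a $p_1$-occurrence simultaneously. Applying the same rule twice undoes the shift, so $\phi$ is an involution.

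\textbf{Main obstacle.} The hard part is the interaction between nested occurrences. Shifting values for one occurrence may alter the conditions on $\beta$ (entries outside $[a,b]$) for the other occurrences. It may also alter whether the prefix lies on a single side for an inner occurrence. So a naive single shift may not swap the counts exactly.

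I would first verify on small $n$ (say $n\le 5$) which permutations have both kinds of occurrences. I would then define $\phi$ recursively on the suffix decomposition, treating the innermost occurrence first, so that each level's prefix condition is toggled independently. The key checks are:
\begin{itemize}
\item the shift preserves the suffix-interval property used in Step 2;
\item the shift preserves the set of candidate $b$-positions;
\item the shift swaps only the "above/below" status of each prefix.
\end{itemize}
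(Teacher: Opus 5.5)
Your decoding in Step~1 is correct, and your overall strategy is the same as the paper's: keep positions fixed and rotate values so that the prefix $\alpha$ flips from lying above $b$ to lying below $a$. The gap is that you never establish the rigidity that lets a single rotation work. Instead you defer the construction of $\phi$ to an unspecified ``layer by layer'' recursion, so as written there is no map to verify.

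The missing fact is that \emph{all} occurrences of $p_1$ and of $p_2$ in $\sigma$ share the same first element $a$. It follows directly from your Step~1. Suppose $(a,b)$ and $(a',b')$ are occurrences (of either pattern) with $b$ to the left of $b'$. The suffix after $b'$, whose values are $(a',b')$, sits inside the suffix after $b$, whose values are $(a,b)$, and $b'\in(a,b)$; hence $a'\ge a$. If $a'$ were to the right of $b$, then $a<a'<b'<b$ with both $a$ and $b$ to the left of $a'$. This violates the prefix condition of $p_1$ (because of $a$) or of $p_2$ (because of $b$). Hence $a'$ is to the left of $b$, so $a'\notin(a,b)$ and $a'=a$.

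Consequently your nested intervals are $(a,b_1)\supset(a,b_2)\supset\cdots$ with a common left endpoint, and every occurrence has the \emph{same} prefix $\alpha$. Moreover, $\sigma$ contains both patterns only if $\alpha=\varnothing$; then $a=\sigma_1$, the two occurrence sets coincide, and $\phi$ can be the identity. So the ``main obstacle'' you describe, with inner occurrences carrying their own prefixes, does not exist. This is exactly the content of Lemmas~\ref{lem:unique-a} and~\ref{lem:same-a}.

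Even with this in hand, Step~3 must become a precise invertible rule, and where you cut the values matters. Suppose one reads your ``outermost interval'' rule as rotating $[a,n]$ so that $[a,b_{\max}]$ moves to the top and $[b_{\max}+1,n]$ drops down to start at $a$. Then the map is not injective. Take $\sigma=6\,2\,1\,7\,5\,4\,3$ (anchor $2$, $b_{\max}=5$) and $\sigma'=6\,1\,5\,7\,4\,3\,2$ (anchor $1$, $b_{\max}=4$). Each has three $p_1$-occurrences and no $p_2$-occurrence, yet both are sent to $2\,4\,1\,3\,7\,6\,5$, because the image does not remember the old anchor value.

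The paper instead cuts at $m=\min\alpha$:
\begin{itemize}
\item if $m>a$, it fixes $[1,a)$, lifts $[a,m)$ to the top and lowers $[m,n]$ to start at $a$;
\item if $m<a$, it fixes $[1,m)$, lifts $[m,a)$ to the top and lowers $[a,n]$ to start at $m$.
\end{itemize}
The block containing $a$ contains every $b$ and its suffix, so occurrence positions are carried over exactly. The two cases are mutually inverse, which yields the involution. Your proposal needs this rule, or some other explicitly invertible one, together with that verification.
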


From Theorem~\ref{thm:class54-joint equi}, we deduce the following result which confirms Conjecture 2 of \cite{SKZ2026}.
\begin{thm}\label{thm:class54-equi}
The eight patterns in Class $54$ are all equidistributed.
\end{thm}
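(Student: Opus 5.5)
The plan is to deduce Theorem~\ref{thm:class54-equi} directly from Theorem~\ref{thm:class54-joint equi}, together with the trivial equivalences within each subclass. Following \cite[Tab.~5]{SKZ2026}, the eight patterns of Class~$54$ split into two subclasses of four patterns each. Within each subclass, any two patterns are related by a symmetry of the square, namely a composition of reverse, complement and inverse, possibly together with the operation exchanging the roles of the two dots that preserves the underlying $12$. Each such operation is a bijection $\fS_n\to\fS_n$ that sends occurrences of one mesh pattern bijectively to occurrences of its image. It therefore preserves the number of occurrences, so patterns in the same subclass are equidistributed.

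First, I will verify explicitly that $p_1$ lies in one subclass and $p_2$ in the other. Concretely, I will list the four images of $p_1$ under the symmetries that preserve the underlying pattern $12$: the identity, reverse-complement, inverse, and inverse composed with reverse-complement. These are obtained by reflecting the shaded box set of $p_1$ in the anti-diagonal and the diagonal. I will check that they give four of the eight displayed patterns. I will then do the same for $p_2$ and check that its orbit gives the remaining four patterns, so the two orbits partition Class~$54$.

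Next, Theorem~\ref{thm:class54-joint equi} gives $p_1\sim p_2$, by taking the first marginal of the joint equidistribution of $(p_1,p_2)$ and $(p_2,p_1)$. Since $\sim$ is an equivalence relation, for any $p$ in the orbit of $p_1$ and any $p'$ in the orbit of $p_2$ we get
\[
p\sim p_1\sim p_2\sim p'.
\]
Hence all eight patterns are equidistributed.

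The only real obstacle is the bookkeeping: confirming that each of the eight listed shadings is indeed the image of $p_1$ or $p_2$ under one of these symmetries. This is a finite check of box coordinates. Under inverse, a box $(i,j)$ maps to $(j,i)$; under reverse-complement, $(i,j)$ maps to $(2-i,2-j)$. I would carry this check out carefully, box by box, for each of the eight patterns.
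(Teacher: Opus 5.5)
Your proposal is correct and follows the paper's argument: the four symmetries preserving the underlying pattern $12$ (identity, reverse-complement, inverse, and their composite) give the trivial equidistributions within each subclass, and Theorem~\ref{thm:class54-joint equi} supplies $p_1\sim p_2$ across the two subclasses. The finite check also works out as you describe: the orbit of $p_1$ is exactly the first four patterns listed in Theorem~\ref{thm:class54}, and the orbit of $p_2$ is exactly the last four.
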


To facilitate our construction of $\phi$, we first establish the structural rigidity pertaining to the occurrences of $p_1$ and $p_2$ patterns through the following two lemmas.
\begin{lem}\label{lem:unique-a}
  If a permutation $\sigma$ contains at least one occurrence of $p_1$ (resp., $p_2$), then the element ``$a$'' acting as the first element of the occurrence is uniquely determined.
\end{lem}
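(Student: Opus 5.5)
We prove uniqueness directly by contradiction, treating $p_1$ and $p_2$ separately. Write an occurrence $(a,b)$ with $a$ at position $i$ and $b$ at position $j$, $i<j$, $a<b$. We read off each shaded cell as an emptiness condition on $\sigma$.

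For $p_1$ the shaded cells $(0,0),(0,1),(1,1),(2,0),(2,2)$ say the following.
\begin{itemize}
\item No entry lies left of $i$ with value below $b$.
\item No entry lies between positions $i$ and $j$ with value in $(a,b)$.
\item No entry lies right of $j$ with value below $a$ or above $b$.
\end{itemize}
For $p_2$ the cells $(0,1),(0,2)$ replace $(0,0),(0,1)$. So no entry lies left of $i$ with value above $a$, and the other conditions are unchanged.

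Suppose $(a,b)$ at positions $(i,j)$ and $(a',b')$ at positions $(i',j')$ are two occurrences with $a<a'$. We split according to whether $a'<b$ or $a'>b$; note $a'\neq b$ is not required, since $a'=b$ can be absorbed into the second case.

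For $p_1$:
\begin{itemize}
\item If $a'<b$, then $a'\in(a,b)$. It cannot lie left of $i$ or strictly between $i$ and $j$, so $i'>j$. Cell $(0,0)$ of $(a',b')$ (equivalently cell $(2,0)$ combined with the left cells) forces every value below $a'$, in particular $a$, to sit between $i'$ and $j'$. This contradicts $i<i'$.
\item If $a'>b$ (or $a'=b$), the same cells of $(a',b')$ force $a$ and $b$ to lie between $i'$ and $j'$. Hence $j<j'$, so $b'>b$ lies right of $j$, which violates cell $(2,2)$ of $(a,b)$.
\end{itemize}

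For $p_2$:
\begin{itemize}
\item If $a'<b$, then as before $i'>j$. Then $b'$ lies right of $j$, so cell $(2,2)$ of $(a,b)$ forces $b'<b$. But $b$ then sits left of $i'$ with value $>b'$, which violates cell $(0,2)$ of $(a',b')$.
\item If $a'>b$, cells $(2,2)$ and $(0,2)$ of $(a,b)$ force $a'$ to lie between $i$ and $j$. The same cells force $b'>a'>b$ to lie between $i$ and $j$, so $j'<j$. Then $b$ lies right of $j'$ with value $b<a'$, which violates cell $(2,0)$ of $(a',b')$.
\end{itemize}
The case $a'=b$ is handled by the same bookkeeping; the only care needed is to track which position-interval each element falls into.

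The main obstacle is purely this case bookkeeping: making sure each of the four regions is checked against the correct shaded cell. By symmetry of the argument, the case $a'<a$ is the same with the roles of the two occurrences swapped. This gives the lemma for both $p_1$ and $p_2$.
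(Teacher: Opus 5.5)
Your proof is correct, but you organize the contradiction differently from the paper. The paper writes out only $p_1$ and leaves $p_2$ to the reader. It orders the two occurrences by \emph{position}: if $a$ lies to the left of $a'$, then cells $(0,0),(0,1)$ of $(a',b')$ give $b'<a<b$. Cell $(2,2)$ of $(a',b')$ then forces $b$ to lie left of $b'$. So $b'$ sits to the right of $b$ with value below $a$, violating cell $(2,0)$ of $(a,b)$. That is a single chain with no case split.

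You order by \emph{value} ($a<a'$), which costs you the split $a'<b$ versus $a'\ge b$. In exchange, your branches rest on transparent region facts: for $p_1$, every value below $a'$ lies strictly between positions $i'$ and $j'$; for $p_2$, every value above $b$ lies strictly between $i$ and $j$. The same template also gives the $p_2$ argument that the paper omits.

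Two presentational points:
\begin{itemize}
\item In the $p_1$, $a'<b$ branch the parenthetical is muddled. Cell $(0,0)$ of $(a',b')$ alone finishes it, since $a$ sits at position $i<j<i'$ with $a<a'$.
\item The boundary case $a'=b$ should be written out rather than ``absorbed.'' In your $p_1$ branch, $b$ then sits at $i'$, not strictly between $i'$ and $j'$. It is one line for both patterns: $b'>a'=b$ lies at position $j'>i'=j$, violating cell $(2,2)$ of $(a,b)$.
\end{itemize}
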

\begin{proof}
We only prove the claim for the pattern $p_1$, since the proof for $p_2$ is based on similar ideas and is left to the interested reader. If $\sigma$ contains one and only one occurrence of $p_1$ then the claim trivially holds true. So we can assume that $(a, b)$ and $(a', b')$ are two distinct occurrences of $p_1$ in $\sigma$, and aim to show that $a=a'$.

Firstly, if $b=b'$, then we must also have $a=a'$ due to the shadings of $p_1$ lying horizontally between the two dots, leading to a contradition with $(a, b)$ and $(a', b')$ being distinct. Therefore we must have $b\neq b'$. Now suppose on the contrary that $a\neq a'$ and $\sigma^{-1}(a)<\sigma^{-1}(a')$ (if $\sigma^{-1}(a)>\sigma^{-1}(a')$ then we simply swap their roles). By the restriction of $p_1$ on $a'$, every element to the left of $a'$ must be larger than $b'$ in value. Since we have assumed that $a$ is to the left of $a'$, we have $b' < a < b$. By the restriction of $p_1$ on $b'$, every element larger than $b'$ must be to the left of $b'$. So $b>a>b'$ effectively implies that $b'$ lies positionally to the right of $b$ but its value is outside of the interval $[a,b]$. Hence $(a,b)$ cannot be a valid occurrence of $p_1$, yielding a contradiction. In conclusion, we should have $a=a'$ and this element $a$ is indeed uniquely determined.
\end{proof}

\begin{lem}\label{lem:same-a}
  If $\sigma$ contains at least one occurrence of $p_1$, say $(a_1, b_1)$, and at least one occurrence of $p_2$, say $(a_2, b_2)$, then we have that
  \begin{enumerate}[label=(\arabic*)]
    \item $a_1 = a_2$;
    \item $\sigma^{-1}(a_1)=1$, and the pair $(a_1,b)$ constitutes an occurrence of $p_1$ in $\sigma$ if and only if $(a_1,b)$ constitutes an occurrence of $p_2$ in $\sigma$. In particular, we have $p_1(\sigma)=p_2(\sigma)$.
  \end{enumerate}
\end{lem}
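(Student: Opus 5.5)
The plan is to translate the two shadings into explicit conditions on an occurrence $(a,b)$, with $a$ at position $i$ and $b$ at position $j>i$ in $\sigma$. Reading off the shaded cells of $p_1$ (namely $(0,0),(0,1),(1,1),(2,0),(2,2)$), $(a,b)$ is an occurrence of $p_1$ if and only if the following three conditions hold:
\begin{itemize}
\item[(L1)] every entry to the left of $a$ is larger than $b$;
\item[(M)] no entry strictly between positions $i$ and $j$ has value in $(a,b)$;
\item[(R)] every entry to the right of $b$ has value in $(a,b)$.
\end{itemize}
For $p_2$ (shaded cells $(0,1),(0,2),(1,1),(2,0),(2,2)$), conditions (M) and (R) are the same, while (L1) is replaced by
\begin{itemize}
\item[(L2)] every entry to the left of $a$ is smaller than $a$.
\end{itemize}
Thus the two patterns differ only in the left region. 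This already indicates how part (2) will follow once part (1) is known.

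For part (1), suppose $(a_1,b_1)$ satisfies (L1), (M), (R) and $(a_2,b_2)$ satisfies (L2), (M), (R), and assume $a_1\neq a_2$. I will split into cases according to the position of $a_2$ relative to $a_1$ and $b_1$. In each case the goal is to locate $b_1$ or $b_2$ in a forbidden region of the other occurrence.
\begin{itemize}
\item If $a_2$ lies left of $a_1$, then by (L1) for $p_1$ we get $a_2>b_1$. The entries left of $a_2$ lie left of $a_1$, so they are simultaneously $>b_1$ and $<a_2$. Now place $b_2$, whose value exceeds $b_1$. If $b_2$ lies right of $b_1$, condition (R) for $p_1$ fails. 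Otherwise $b_1$ lies right of $b_2$ with $b_1<a_2$, and condition (R) for $p_2$ fails.
\item If $a_2$ lies right of $a_1$, then (L2) gives $a_1<a_2$. If $a_2$ sits strictly between $a_1$ and $b_1$, then (M) forces $a_2>b_1$. Again $b_2$ either lies right of $b_1$, violating (R) for $p_1$, or lies left of $b_1$, so that $b_1<a_2$ violates (R) for $p_2$.
\item If $a_2=b_1$ or $a_2$ lies right of $b_1$, condition (R) for $p_1$ (or the value $b_2>b_1$) together with (L2) gives a contradiction quickly.
\end{itemize}
Hence $a_1=a_2=:a$.

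Next, for part (2), apply (L1) and (L2) to the common element $a$. Every entry left of $a$ would have to be both $>b_1>a$ and $<a$, so no such entry exists, and $\sigma^{-1}(a)=1$. With $a$ in the first position, (L1) and (L2) are vacuous. So a pair $(a,b)$ is an occurrence of $p_1$ if and only if (M) and (R) hold, if and only if it is an occurrence of $p_2$. Finally, Lemma~\ref{lem:unique-a} says every occurrence of $p_1$, and every occurrence of $p_2$, has this same first element $a$. The occurrences of the two patterns are therefore exactly the same pairs $(a,b)$, and $p_1(\sigma)=p_2(\sigma)$.

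I expect the main obstacle to be the case analysis in part (1), specifically making sure that every relative placement of $a_2$ and $b_2$ is covered, including the degenerate coincidences $a_2=b_1$ and $b_2=b_1$. The tool throughout is to use the strong constraint (R), shared by both patterns, which forces everything after the second dot into the open interval between the two values.
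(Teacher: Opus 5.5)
Your proof is correct and follows the paper's argument. You translate the shadings into the left, middle and right conditions, split on whether $a_2$ lies left or right of $a_1$, and derive a contradiction from the shared condition (R); you then combine (L1) and (L2) at the common first element to get $\sigma^{-1}(a)=1$ and identical occurrence sets. Your case split is slightly more explicit than the paper's, which passes over two placements that your third bullet handles only tersely: $a_2=b_1$ fails because $b_2>b_1$ lies to the right of $b_1$, and $a_2$ to the right of $b_1$ fails because (R) for $p_1$ puts $b_1>a_2$ to the left of $a_2$, contradicting (L2).
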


\begin{proof}
For part (1), let us first consider the case that $\sigma^{-1}(a_2) < \sigma^{-1}(a_1)$. By the same argument as in the proof of Lemma~\ref{lem:unique-a}, we can conclude that $b_1$ lies positionally to the right of $b_2$ but outside of the interval $[a_2,b_2]$ in value, which prevents $(a_2,b_2)$ from being a valid $p_2$ pattern, a contradiction. For the other case with $\sigma^{-1}(a_1) < \sigma^{-1}(a_2)$, by the restriction of $p_2$ on $a_2$ we see that $a_1<a_2$. Then due to the shaded cell in $p_1$ at $(1,1)$ and the shaded cell in $p_2$ at $(2,0)$, we must have $b_1<a_2<b_2$ and $b_1$ is to the left of $b_2$. But this makes $(a_1,b_1)$ an invalid occurrence of $p_1$, again a contradiction. This finishes the proof of (1).

To see (2), note that the restriction of $p_1$ requires that all elements to the left of $a_1$ must be larger than $a_1$, whereas the restriction of $p_2$ forces all elements to the left of $a_2$ ($=a_1$ by (1)) to be smaller than $a_2$. These two requirements are mutually exclusive, leaving us with the only possibility that there are no elements to the left of $a_1=a_2$, whence both requirements are vacuously satisfied. Thus we have $\sigma^{-1}(a_1)=1$ as claimed. Furthermore, in view of the fact that $p_1$ and $p_2$ have the identical shadings for their 2nd and 3rd columns, we obtain the ``if and only if'' part of the claim.
\end{proof}
In view of the last two lemmas, whenever $\sigma \notin \fS_n(p_1,p_2)$, all occurrences of the patterns $p_1$ or $p_2$ must begin with the uniquely determined element $a$. We refer to $a$ as the \emph{anchor} of $\sigma$ and denote it by $\anc(\sigma)$.

\begin{proof}[Proof of Theorem~\ref{thm:class54-joint equi}]
It suffices to define the aforementioned involution $\phi:\fS_n\to \fS_n$ and verify that for every $\sigma\in\fS_n$, we have
\begin{align}\label{id:phi-p1-p2}
p_1(\sigma) = p_2(\phi(\sigma)).
\end{align}

The mapping $\phi$ is defined with respect to three cases. If $\sigma$ avoids both $p_1$ and $p_2$, we define $\phi(\sigma):=\sigma$ and it clearly satisfies \eqref{id:phi-p1-p2}. If $\sigma$ contains both $p_1$ and $p_2$, we also define $\phi(\sigma):=\sigma$. Thanks to Lemma~\ref{lem:same-a}, we see that in this case $p_1(\sigma)=p_2(\sigma)$, hence identity \eqref{id:phi-p1-p2} still holds true. For the final case, relying on Lemma~\ref{lem:unique-a} we can assume that 
$$\sigma\in \fS_n(p_1) \mathbin{\triangle} \fS_n(p_2):= \left(\fS_n(p_2)\setminus\fS_n(p_1)\right)\cup \left(\fS_n(p_1)\setminus\fS_n(p_2)\right)$$ 
with $a:=\anc(\sigma)$ and $k:=\sigma^{-1}(a)>1$\footnote{If $k=1$ then $(a,b)$ is an occurrence of $p_1$ if and only if it is an occurrence of $p_2$, and thus $\sigma$ cannot be in the symmetric difference $\fS_n(p_1) \mathbin{\triangle} \fS_n(p_2)$.}. For $p=p_1$ or $p_2$, we let
$$\Omega(p;\sigma):=\{k<j\le n:\text{$(a,\sigma_j)$ constitutes an occurrence of $p$ in $\sigma$}\}$$
denote the set of the positions that correspond to the values $b$ such that $(a,b)$ forms an occurrence of pattern $p$ in $\sigma$, and let 
\begin{align*}
\mla(\sigma):= & \text{ the index of the minimal element among}\\
& \text{ all elements strictly to the left of $a$ in $\sigma$.}
\end{align*} 
Assuming $\sigma_{\mla(\sigma)}=m$, we construct the image permutation $\tau$ according to the following two subcases:
% $$\mla(\sigma):=\min\{j\in [1,k-1]: \sigma_l\ge \sigma_j \text{ for all $1\le l<k$}\}$$ That is, the prefix of $\sigma$ to the left of $a$ attains its minimum at position $\mla(\sigma)$.
\begin{itemize}
    \item \textbf{Case 1:} If $m > a$, then we see that $\sigma\in\fS_n(p_2)\setminus\fS_n(p_1)$, and we make the decomposition $[n]=[1,a)\cup[a,m)\cup[m,n]=:V_1\cup V_2\cup V_3$; see the left diagram in Figure~\ref{fig:class54-example}. Let $\tau=\tau_1\ldots \tau_n$ be the permutation such that for each $1\le i\le n$
    \begin{align*}
    \tau_i &=\begin{cases}
    \sigma_i & \text{if $\sigma_i\in V_1$,}\\
    \sigma_i+(n+1-m) & \text{if $\sigma_i\in V_2$,}\\
    \sigma_i-(m-a) & \text{if $\sigma_i\in V_3$.}
    \end{cases}
    \end{align*}
    % We increase each element whose value is in $[a, m)$ by $n - m + 1$, and decrease each element whose value is in $[m, n]$ by $m - a$. 
    \item \textbf{Case 2:} If $m < a$, then we see that $\sigma\in\fS_n(p_1)\setminus\fS_n(p_2)$, and we make the decomposition $[n]=[1,m)\cup[m,a)\cup[a,n]=:V_1\cup V_2\cup V_3$; see the right diagram in Figure~\ref{fig:class54-example}. Let $\tau=\tau_1\ldots \tau_n$ be the permutation such that for each $1\le i\le n$
    \begin{align*}
    \tau_i &=\begin{cases}
    \sigma_i & \text{if $\sigma_i\in V_1$,}\\
    \sigma_i+(n+1-a) & \text{if $\sigma_i\in V_2$,}\\
    \sigma_i-(a-m) & \text{if $\sigma_i\in V_3$.}
    \end{cases}
    \end{align*}
\end{itemize}

We claim that our mapping $\phi$ sends $\sigma$ to a permutation $\tau$ such that
\begin{align}
  \anc(\sigma) &= \anc(\tau),\label{eq:same anc}\\
  \mla(\sigma) &= \mla(\tau),\label{eq:same mla}\\
  \Omega(p_1;\sigma) &= \Omega(p_2;\tau),\label{eq:Omega-1}\\
  \Omega(p_2;\sigma) &= \Omega(p_1;\tau).\label{eq:Omega-2}
\end{align}
Note that \eqref{eq:Omega-1} already implies \eqref{id:phi-p1-p2}. It is fairly routine to verify all of the claimed relations (9)--(12). We give the details for \eqref{eq:Omega-1} and leave the remaining verifications to the reader. Recall that $\anc(\sigma)=\anc(\tau)=a$, $\sigma_{\mla(\sigma)}=m$. Now suppose $j\in\Omega(p_1;\sigma)$ so $(a,\sigma_j)$ forms a $p_1$-pattern in $\sigma$. By the restriction of $p_1$, we must have $\sigma_j<m$, and all elements to the right of $\sigma_j$ have values inside the interval $[a,\sigma_j]$. So these elements (including $a$ and $\sigma_j$ themselves) all belong to $V_2=[a,m)$, ensuring that their relative orders in $\tau$ remain the same as in $\sigma$. Consequently, $(a+(n+1-m),\tau_j)$ forms a $p_2$-occurrence in $\tau$ hence $j\in\Omega(p_2;\tau)$. Conversely, every $p_2$-occurrence, say $(a',\tau_j)$, in $\tau$ must correspond to a $p_1$-occurrence, namely $(a'-(n+1-m),\sigma_j)$, in $\sigma$, showing that if $j\in\Omega(p_2;\tau)$ then $j\in\Omega(p_1;\sigma)$. Such a one-to-one correspondence proves \eqref{eq:Omega-1}.

Suppose $\sigma$ is in Case 1 so that $\Omega(p_1;\sigma)\neq\varnothing$ and $\Omega(p_2;\sigma)=\varnothing$, applying \eqref{eq:Omega-1} and \eqref{eq:Omega-2} we see that $\tau$ is in Case 2. A similar argument proves that if $\sigma$ is in Case 2 then $\tau$ is in Case 1. From the construction of $\phi$ as well as the relations \eqref{eq:same anc} and \eqref{eq:same mla}, one sees that $\phi(\phi(\sigma))=\sigma$ for every $\sigma\in \fS_n(p_1) \mathbin{\triangle} \fS_n(p_2)$, hence $\phi$ is indeed an involution in this third and final case.
\end{proof}

\begin{figure}[!h]  
  \begin{center}  
    
    \begin{tabular}{cc}

      \begin{tikzpicture}[scale=0.40] 
        \tikzset{    
          grid/.style={ draw, step=1cm, black!100, very thin }, 
          cell/.style={ draw, anchor=center, text centered },  
          graycell/.style={ fill=gray!40, draw=none, minimum width=1cm, minimum height=1cm, anchor=south west } 
        }  
        
        \draw[grid] (0,0) grid (11,11);
        
        % 坐\UTF{6807}\UTF{8F74}刻度数字
        \foreach \x in {1,...,10} \node[anchor=north,font=\tiny] at (\x,-0.2) {\x};
        \foreach \y in {1,...,10} \node[anchor=east,font=\tiny] at (-0.2,\y) {\y};
        
        % 粗体方框
        \draw[line width=1pt, red] (-0.3,7.5) -- (11.3,7.5);
        \draw[line width=1pt, red] (-0.3,1.5) -- (11.3,1.5);
        
        % 散点
        \filldraw[black] (1,10) circle (8pt);
        %\filldraw[black] (2,2) circle (8pt);
        %\filldraw[black] (3,4) circle (8pt);
        \filldraw[black] (4,1) circle (8pt);
        \filldraw[black] (5,7) circle (8pt);
        \filldraw[black] (6,9) circle (8pt);
        \filldraw[black] (7,6) circle (8pt);
        \draw[thick] (3,2) circle (6pt); \draw[thick] (3,2) circle (2pt); \node[above right] at (3, 1.9) {\footnotesize $a$};
        %\draw[thick] (1,3) circle (8pt); \draw[thick] (1,3) circle (2pt);
        \draw[thick] (2,8) circle (6pt); \draw[thick] (2,8) circle (2pt); \node[above right] at (1.8, 7.9) {\footnotesize $m$};
        \filldraw[black] (8,4) circle (8pt);
        \filldraw[black] (9,5) circle (8pt);
        \filldraw[black] (10,3) circle (8pt);
        %\draw[thick] (10,4) circle (8pt); \draw[thick] (10,5) circle (2pt);  
        
      \end{tikzpicture}
      
      &
      
      \begin{tikzpicture}[scale=0.40] 
        \tikzset{    
          grid/.style={ draw, step=1cm, black!100, very thin }, 
          cell/.style={ draw, anchor=center, text centered },  
          graycell/.style={ fill=gray!40, draw=none, minimum width=1cm, minimum height=1cm, anchor=south west } 
        }  
        
        \draw[grid] (0,0) grid (11,11);
        
        % 坐\UTF{6807}\UTF{8F74}刻度数字
        \foreach \x in {1,...,10} \node[anchor=north,font=\tiny] at (\x,-0.2) {\x};
        \foreach \y in {1,...,10} \node[anchor=east,font=\tiny] at (-0.2,\y) {\y};
        
        % 粗体方框
        \draw[line width=1pt, red] (-0.3,4.5) -- (11.3,4.5);
        \draw[line width=1pt, red] (-0.3,1.5) -- (11.3,1.5);

        % 散点
        \filldraw[black] (1,4) circle (8pt);
        %\filldraw[black] (2,2) circle (8pt);
        %\filldraw[black] (3,4) circle (8pt);
        \filldraw[black] (4,1) circle (8pt);
        \filldraw[black] (5,10) circle (8pt);
        \filldraw[black] (6,3) circle (8pt);
        \filldraw[black] (7,9) circle (8pt);
        \draw[thick] (3,5) circle (6pt); \draw[thick] (3,5) circle (2pt); \node[above right] at (2.8, 4.9) {\footnotesize $a'$};
        %\draw[thick] (1,3) circle (8pt); \draw[thick] (1,3) circle (2pt);
        \draw[thick] (2,2) circle (6pt); \draw[thick] (2,2) circle (2pt); \node[above right] at (1.7, 1.8) {\footnotesize $m'$};
        \filldraw[black] (8,7) circle (8pt);
        \filldraw[black] (9,8) circle (8pt);
        \filldraw[black] (10,6) circle (8pt);
        %\draw[thick] (10,4) circle (8pt); \draw[thick] (10,5) circle (2pt);        
      \end{tikzpicture}
      
    \end{tabular}
  \end{center}
  \caption{The permutation $\sigma=(10)821796453$ with $\Omega(p_1;\sigma)=\{7,10\}$ is mapped to $\tau=4251(10)39786$ with $\Omega(p_2;\tau)=\{7,10\}$ under the involution $\phi$.}\label{fig:class54-example}
\end{figure}

For a word $w$, any subword $u$ consisting of contiguous letters in $w$ is called a \emph{factor} of $w$, and we denote by $w^{\rr}$ the \emph{reverse} of $w$, i.e., the word obtained from $w$ by reversing the order of its letters. For a word $w$ and a letter $x$, sometimes we shall abuse the notation to write $x>w$, meaning that $x$ is greater than every letter in $w$. The inequality $x<w$ is understood analogously. Recall that a \emph{right-to-left maximum} in a given permutation $\sigma=\sigma_1\ldots\sigma_n$ refers to a letter $\sigma_i$, such that $\sigma_i>\sigma_j$ for all $i<j\le n$. Let $\RLM(\sigma)$ denote the set of right-to-left maxima in $\sigma$.

Next we deal with Class 71. It consists of $6$ patterns divided into two subclasses (one with $2$ patterns and the other with $4$ patterns); see \cite[Tab.~5]{SKZ2026} and \cite[Tab.~2, Classes 61 and 62]{KZ2019}. We take $p_3:=\pattern{scale=0.6}{2}{1/1,2/2}{0/0,1/2,2/2,2/1}$ from the first subclass and $p_4:=\pattern{scale=0.6}{2}{1/1,2/2}{0/2,1/0,2/1,2/2}$ from the second subclass, and show the following result, thereby confirming one of the conjectured equidistributions from \cite[Tab.~2]{KZ2019}. 

\begin{thm}\label{thm:class71-equi}
There exists a bijection $\psi:\fS_n\to\fS_n$ such that
\begin{align}
\RLM(\sigma) &=\RLM(\psi(\sigma)),\label{id:RLMax}\\
\label{id:p3-p4}
p_3(\sigma) &=p_4(\psi(\sigma)),
\end{align} 
for every $\sigma\in\fS_n$ and $n\ge 1$. In particular, identity \eqref{id:p3-p4} implies that $p_3\sim p_4$, thus the $6$ patterns in Class $71$ are all equidistributed.
\end{thm}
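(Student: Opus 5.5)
The first step is to translate both patterns into statements about the right-to-left maxima of $\sigma$, since \eqref{id:RLMax} indicates that $\RLM$ is the skeleton to keep fixed. For an occurrence $(a,b)$ of either $p_3$ or $p_4$, the shaded cells $(2,1),(2,2)$ force every entry to the right of $b$ to be smaller than $a$. Hence $b\in\RLM(\sigma)$ and $a>c_b$, where $c_b$ is the largest entry to the right of $b$ (take $c_b=0$ if $b=\sigma_n$). The remaining cells then read as follows.

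\emph{For $p_3$:} cell $(0,0)$ says $a$ is a left-to-right minimum of $\sigma$. Cell $(1,2)$ says no entry between $a$ and $b$ exceeds $b$, i.e.\ $a$ lies in the segment $S_b$ strictly between the last entry larger than $b$ to the left of $b$ and $b$ itself.

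\emph{For $p_4$:} cell $(0,2)$ forbids entries larger than $b$ to the left of $a$. Since $b$ is a right-to-left maximum, nothing larger than $b$ lies to its right either, so this forces $b=n$. Cell $(1,0)$ then says every entry between $a$ and $n$ is larger than $a$, i.e.\ $a$ is a right-to-left minimum of the prefix preceding $n$.

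So $p_4(\sigma)$ counts right-to-left minima of the prefix before $n$ whose value exceeds $c_n$. By contrast, $p_3(\sigma)$ is a sum over all $b\in\RLM(\sigma)$ of the number of left-to-right minima of $\sigma$ lying in $S_b$ with value above $c_b$.

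Next I would decompose $\sigma$ along its right-to-left maxima $r_1=n>r_2>\cdots>r_k=\sigma_n$, writing $\sigma=w_1r_1w_2r_2\cdots w_kr_k$ with each $w_i<r_i$. Then $c_{r_i}$ is determined by $w_{i+1},\dots,w_k$. The key reorganisation I would establish is this: the left-to-right minima of $\sigma$ above $c_{r_i}$ that lie in some $S_{r_j}$ with $j\ge i$ all live in the prefix before $r_i$. After thresholding at $c_{r_1}$, this makes $p_3(\sigma)$ expressible as the number of left-to-right minima of the prefix $w_1$ above $c_{r_1}$, plus analogous contributions from later blocks. Symmetrically, $p_4$ only sees $w_1$, through its right-to-left minima above $c_{r_1}$.

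I would then build $\psi$ recursively on this block structure, keeping every $r_i$ in its position and every block $w_i$ as a set of values. Within the prefix, I would use a known involution on words that exchanges left-to-right minima with right-to-left minima while preserving the multiset of letters. A natural first choice is the reverse $w^{\rr}$ applied to the entries above the relevant threshold, combined with a cycle/Foata-type rearrangement so that the threshold set is preserved. The contributions from the later blocks in $p_3$ would be transported into $w_1$ by moving values across the thresholds $c_{r_i}$ via the value-shifting idea used for $\phi$ in Theorem~\ref{thm:class54-joint equi}.

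The main obstacle is exactly this last step. $p_3$ aggregates contributions from every right-to-left maximum, while $p_4$ is concentrated at $b=n$. So $\psi$ must move entries between blocks without changing which letters are right-to-left maxima and without disturbing the thresholds $c_{r_i}$. My first attempt would be to prove the equality of generating functions block by block, conditioned on the sequence $(r_i)$ and the block sizes, and only afterwards extract an explicit bijection from that recursion.
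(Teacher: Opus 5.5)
\textbf{There is a genuine gap, and it starts with your translation of $p_4$.} Your reading of $p_3$ is correct, but the one for $p_4$ is not. Cell $(1,2)$ is \emph{unshaded} in $p_4$. So cells $(0,2)$ and $(2,2)$ only force every entry larger than $b$ to lie strictly \emph{between} $a$ and $b$; they do not force $b=n$.

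For example, in $\sigma=132$ the pair $(1,2)$ is a $p_4$-occurrence, with the $3$ sitting in the unshaded cell $(1,2)$. Hence $p_4(132)=1$, while your formula (right-to-left minima of the prefix before $n$ exceeding $c_n$) gives $0$. Over $\fS_3$, your reformulated statistic takes the values $0,1,2$ with multiplicities $3,2,1$, whereas $p_3$ (and the true $p_4$) have multiplicities $2,3,1$.

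So the statement you set out to prove is false; your statistic is not even Wilf-equivalent to $p_3$. The ``main obstacle'' of funnelling contributions from all right-to-left maxima into $b=n$ is an artifact of this misreading. Independently, you never actually define $\psi$. Reversal, a Foata-type rearrangement and the value shift from $\phi$ are named but never assembled, and the fallback generating-function argument is not carried out.

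\textbf{What survives, and how the paper closes the argument.} It is true that every $p_4$-occurrence has its $a$ to the left of $n$. But $b$ can be any right-to-left maximum, and occurrences can be matched with the \emph{same} $b$.

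Decompose $\sigma=N_1M_1r_1\cdots N_{l-1}M_{l-1}r_{l-1}N_lr_l$, where $M_i$ is the longest factor just before $r_i$ whose letters are all below $r_{i+1}$. Then:
\begin{itemize}
\item $(a,r_j)$ is a $p_3$-occurrence in $\sigma$ iff $a\in N_j$, $a>r_{j+1}$, and $a$ is smaller than everything to its left.
\item $(a,r_j)$ is a $p_4$-occurrence in a permutation with the same right-to-left maxima iff $a>r_{j+1}$, $a$ lies left of every letter exceeding $r_j$, and $a$ is smaller than everything between it and $r_j$.
\end{itemize}

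The paper sets $\psi(\sigma)=N_l^{\rr}\cdots N_1^{\rr}\,r_1M_1r_2M_2\cdots r_{l-1}M_{l-1}r_l$. This preserves $\RLM$ (each $M_i<r_{i+1}$). It is invertible, because the first letter of each nonempty $N_i^{\rr}$ lies in $(r_{i+1},r_i)$ while everything before it is below $r_{i+1}$; this lets you cut the prefix back into blocks.

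The key point: if $N_j=P_jaQ_j$, then the letters to the left of $a$ in $\sigma$, namely those of $N_1M_1r_1\cdots N_{j-1}M_{j-1}r_{j-1}P_j$, are exactly the letters between $a$ and $r_j$ in $\psi(\sigma)$. So your reversal idea, applied to the $N$-blocks and relocated in front of $n$, turns the left-to-right-minimum condition of $p_3$ into the cell-$(1,0)$ condition of $p_4$, occurrence by occurrence. No contributions ever need to move between blocks.
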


\begin{proof}
  Given a permutation $\sigma\in\fS_n$ with $\RLM(\sigma)=\{r_1(=n),r_2,\ldots,r_l(=\sigma_n)\}_{>}$, we decompose it uniquely as
  $$\sigma = N_1 M_1 r_1 N_2 M_2 r_2\dots N_{l-1}M_{l-1}r_{l-1}N_l r_l,$$
  where for every $1\le i\le l-1$, $M_i$ is the longest factor whose letters, if any, are all less than $r_{i+1}$. We define its image $\tau=\psi(\sigma)$ to be
  $$\tau:=N^{\rr}_l N^{\rr}_{l-1} \dots N^{\rr}_1r_1 M_1 r_2 M_2\dots r_{l-1}M_{l-1} r_l.$$
The reader may refer to Figure~\ref{fig:class71-decomp} for an illustration of the above decompositions for generic permutations $\sigma$ and $\tau$.

  \begin{figure}[!h]
  \begin{center}
    \begin{tabular}{cc}
      \begin{tikzpicture}[scale=0.7]
        \tikzset{
          grid/.style={ draw, step=1cm, black!100, thin },
          graycell/.style={ fill=gray!50, draw=none, minimum width=1cm, minimum height=1cm, anchor=south west }
        }
        
        \fill[graycell] (1,5) rectangle (7,7);
        \fill[graycell] (3,3) rectangle (7,5);
        \fill[graycell] (5,1) rectangle (7,3);
        \draw (0,0) rectangle (7,7);

        \filldraw[black] (2,7) circle (2.5pt);
        \node[anchor=west] at (1.9,6.7) {\footnotesize{$r_1=n$}};
        
        \filldraw[black] (4,5) circle (2.5pt);
        \node[anchor=west] at (3.9,4.7) {\footnotesize{$r_2$}};
        
        \filldraw[black] (6,3) circle (2.5pt);
        \node[anchor=west] at (5.9,2.7) {\footnotesize{$r_3$}};
        
        \filldraw[black] (7,1) circle (2.5pt);
        \node[anchor=west] at (6.9,0.7) {\footnotesize{$r_4=\sigma_n$}};
        
        \draw[thick] (0.1, 0.1) rectangle (0.9, 6.9);
        \node at (0.5, 3.5) {$N_1$};  
        \draw[thick] (1.1, 0.1) rectangle (1.9, 4.9);
        \node at (1.5, 2.5) {$M_1$};    
        \draw[thick] (2.1, 0.1) rectangle (2.9, 4.9);
        \node at (2.5, 2.5) {$N_2$};    
        \draw[thick] (3.1, 0.1) rectangle (3.9, 2.9);
        \node at (3.5, 1.5) {$M_2$};
        \draw[thick] (3.1, 0.1) rectangle (3.9, 2.9);
        \node at (3.5, 1.5) {$M_2$};
        \draw[thick] (4.1, 0.1) rectangle (4.9, 2.9);
        \node at (4.5, 1.5) {$N_3$};
        \draw[thick] (5.1, 0.1) rectangle (5.9, 0.9);
        \node at (5.5, 0.5) {$M_3$};
        \draw[thick] (6.1, 0.1) rectangle (6.9, 0.9);
        \node at (6.5, 0.5) {$N_4$};        
      \end{tikzpicture}
      &
      \begin{tikzpicture}[scale=0.7]
        \tikzset{
          grid/.style={ draw, step=1cm, black!100, thin },
          graycell/.style={ fill=gray!50, draw=none, minimum width=1cm, minimum height=1cm, anchor=south west }
        }
        
        \fill[graycell] (0,5) rectangle (3,7);
        \fill[graycell] (4,5) rectangle (7,7);
        
        \fill[graycell] (0,3) rectangle (2,5);
        \fill[graycell] (5,3) rectangle (7,5);
        
        \fill[graycell] (0,1) rectangle (1,3);
        \fill[graycell] (6,1) rectangle (7,3);
        
        \draw (0,0) rectangle (7,7);

        \filldraw[black] (4,7) circle (2.5pt);
        \node[anchor=west] at (3.9,6.7) {\footnotesize{$r_1=n$}};
        
        \filldraw[black] (5,5) circle (2.5pt);
        \node[anchor=west] at (4.9,4.7) {\footnotesize{$r_2$}};
        
        \filldraw[black] (6,3) circle (2.5pt);
        \node[anchor=west] at (5.9,2.7) {\footnotesize{$r_3$}};
        
        \filldraw[black] (7,1) circle (2.5pt);
        \node[anchor=west] at (6.9,0.7) {\footnotesize{$r_4=\tau_n$}};
        
        \draw[thick] (0.1, 0.1) rectangle (0.9, 0.9);
        \node at (0.5, 0.5) {$N^{\rr}_4$};  
        \draw[thick] (1.1, 0.1) rectangle (1.9, 2.9);
        \node at (1.5, 1.5) {$N^{\rr}_3$};    
        \draw[thick] (2.1, 0.1) rectangle (2.9, 4.9);
        \node at (2.5, 2.5) {$N^{\rr}_2$};    
        \draw[thick] (3.1, 0.1) rectangle (3.9, 6.9);
        \node at (3.5, 3.5) {$N^{\rr}_1$};
        
        \draw[thick] (4.1, 0.1) rectangle (4.9, 4.9);
        \node at (4.5, 2.5) {$M_1$};
        \draw[thick] (5.1, 0.1) rectangle (5.9, 2.9);
        \node at (5.5, 1.5) {$M_2$};
        \draw[thick] (6.1, 0.1) rectangle (6.9, 0.9);
        \node at (6.5, 0.5) {$M_3$};
        
      \end{tikzpicture}
    \end{tabular}
    \caption{The decompositions of generic $\sigma$ and $\tau=\psi(\sigma)$.} \label{fig:class71-decomp}
  \end{center}
\end{figure}

  It is true that $N^{\rr}_l N^{\rr}_{l-1} \dots N^{\rr}_1=(N_1\dots N_{l-1}N_l)^{\rr}$. However, we prefer the former expression for $\tau$, as viewing it as a block-by-block reversal is useful for constructing the inverse mapping $\psi^{-1}$. Indeed, by our definition of $M_i$'s we see that $\RLM(\tau)=\{r_1,r_2,\ldots,r_l\}$, which proves \eqref{id:RLMax}. Moreover, if $N_i^{\rr}$ is nonempty for some $1\le i\le l$, then its leftmost element (or equivalently, the rightmost element of $N_i$), say $n_i$, must satisfy that\footnote{Here we set $r_{l+1}:=0$ as a convention.} 
  \begin{enumerate}[label=(\arabic*)]
    \item $r_{i+1}<n_i<r_i$; and 
    \item all elements to the left of $n_i$ are less than $r_{i+1}$.
  \end{enumerate} 
  These observations result in a decomposition of the prefix (to the left of $r_1=n$ in $\tau$) into blocks, so that one can utilize $(N^{\rr}_i)^{\rr}=N_i$ to uniquely recover the preimage $\psi^{-1}(\tau)$. Therefore $\psi^{-1}$ is well-defined and $\psi$ is indeed a bijection.

  It remains to verify that $\psi$ has property \eqref{id:p3-p4}. To that end, we require some further observations on $\sigma$ and $\tau$. Suppose $(a,b)$ is an occurrence of the pattern $p_3$ in $\sigma$, and $(a',b')$ is an occurrence of the pattern $p_4$ in $\tau$. We claim that
\begin{enumerate}[label=(\arabic*), resume]
  \item $b$ (resp.~$b'$) must be a right-to-left maximum of $\sigma$ (resp.~$\tau$), say $b=r_j$ (resp.~$b'=r_k$).
  \item $a\in N_j$ so that we can decompose $N_j=P_j a Q_j$ for some (possibly empty) factors $P_j$ and $Q_j$.
  \item $a>r_{j+1}$ and $a<N_1M_1\ldots N_{j-1}M_{j-1}P_j$. 
  \item $a'\in N^{\rr}_k$ so that we can decompose $N^{\rr}_k=Q^{\rr}_k a' P^{\rr}_k$ for some (possibly empty) factors $Q^{\rr}_k$ and $P^{\rr}_k$.
  \item $a'>r_{k+1}$ and $a< P^{\rr}_k N^{\rr}_{j-1}\ldots N^{\rr}_{1} M_1\ldots M_{k-1}$.
\end{enumerate}
  
  Crucial to the verification of all above claims is a precise grasp of the constraints enforced by the shadings in the mesh patterns $p_3$ and $p_4$. More precisely, the northeast corner shadings in both $p_3$ and $p_4$ readily give us (3). To see (4) and (5), note that we have assumed in (3) that $b=r_j$, so $a$ must belong to the subword $N_1M_1\ldots N_jM_j$. However, when $j>1$, the presence of $r_{j-1}$ and the shaded $(1,2)$-cell in $p_3$ will prevent any letter belonging to $N_1M_1\ldots N_{j-1}M_{j-1}$ from making a valid $p_3$-pattern with $b=r_j$, hence $a\in N_jM_j$. In addition, if $a<r_{j+1}$ then $r_{j+1}$, occupying the shaded $(2,1)$-cell in $p_3$, will render $(a,b)=(a,r_j)$ an invalid $p_3$-pattern. Therefore $a>r_{j+1}$ as claimed. We further use the shaded $(0,0)$-cell in $p_3$ to deduce that $a<N_1M_1\ldots N_{j-1}M_{j-1}P_j$ and confirm all claims in (4) and (5). Claims in (6) and (7) follow from a mostly similar argument that leverages the shaded regions in pattern $p_4$. In particular, we emphasize that to see that $a'$ cannot belong to $N^{\rr}_{\ell}$ for a certain $1\le \ell <k$, one needs the previous claim (1) saying that the leftmost letter, say $n_{\ell}$, in $N^{\rr}_{\ell}$ must satisfy $n_{\ell}>r_{\ell+1}\ge r_k=b'>a'$, which turns $(a',b')$ into an invalid $p_4$-pattern, a contradiction. Further details are omitted. 

  Finally we notice that property \eqref{id:p3-p4} follows from the following stronger fact:
\begin{enumerate}[label=(\arabic*), resume]
   \item $(a,b)$ forms a $p_3$-pattern in $\sigma$ (i.e. it satisfies (3), (4), and (5)), if and only if $(a,b)$ forms a $p_4$-pattern in the image $\tau=\psi(\sigma)$ (i.e., it satisfies (3), (6), and (7)).
\end{enumerate}
The validity of (8) follows directly from the evidently parallel formulations of (4)--(5) and (6)--(7).
\end{proof}

\begin{example}\label{eg:psi}
	Take $\sigma= 14~12~10~\mathbf{15}~11~9~\mathbf{13}~7~2~6~4~3~\mathbf{8}~1~\mathbf{5}$ with $\RLM(\sigma)=\{15,13,8,5\}$. We have $N_1=14$, $M_1=12\,10$, $N_2=11\,9$, $M_2=\varnothing$, $N_3=7\,2\,6$, $M_3=4\,3$, and $N_4=1$. Thus,
  \begin{align*}
  \tau &:=\psi(\sigma) = N^{\rr}_4\,N^{\rr}_3\,N^{\rr}_2\,N^{\rr}_1\,\mathbf{15}\,M_1\,\mathbf{13}\,M_2\,\mathbf{8}\,M_3\,\mathbf{5}\\
  &=1~6~2~7~9~11~14~\mathbf{15}~12~10~\mathbf{13}~\mathbf{8}~4~3~\mathbf{5}.
  \end{align*} 
  Agreeing with the claim (8) in the proof of Theorem~\ref{thm:class71-equi}, the pairs $(14,15), (9,13), (7,8)$, and $(1,5)$ are simultaneously the occurrences of $p_3$ in $\sigma$ and the occurrences of $p_4$ in $\tau$; see Figure~\ref{fig:class71-example} below for an illustration.
\end{example}
 % The left diagram shows $\sigma$ decomposed into blocks $N_i$ and $M_i$ relative to the right-to-left maxima $r_i$. While the right diagram shows the image permutation $\tau=\psi(\sigma)$, where blocks $M_i$ maintain their internal structures while blocks $N_i$ are reversed to $N^{\rr}_i$ and shifted to the prefix.
\begin{figure}[!h]  
	\begin{center}  
		\begin{tabular}{cc}
			
			% ==========================================
			% 左图：原排列 \pi
			% ==========================================
			\begin{tikzpicture}[scale=0.25] 
				\tikzset{    
					grid/.style={ draw, step=1cm, black!50, very thin }, 
					nblock/.style={ fill=blue!10, draw=blue, thick }, % N_i 块样式 (浅蓝底, 蓝框)
					mblock/.style={ fill=red!10, draw=red, thick }    % M_i 块样式 (浅红底, 红框)
				}  
				
				% 1. 绘制色块底色 (在网格下方)
				\filldraw[nblock] (0.5, 13.5) rectangle (1.5, 14.5);   % N_1
				\filldraw[mblock] (1.5, 9.5)  rectangle (3.5, 12.5);   % M_1
				\filldraw[nblock] (4.5, 8.5)  rectangle (6.5, 11.5);   % N_2
				\filldraw[nblock] (7.5, 1.5)  rectangle (10.4, 7.5);    % N_3
				\filldraw[mblock] (10.6, 2.5)  rectangle (12.5, 4.5);   % M_3
				\filldraw[nblock] (13.5, 0.5) rectangle (14.5, 1.5);   % N_4
				
				% 2. 绘制网格
				\draw[grid] (0,0) grid (16,16);
				
				% 3. 坐标轴刻度数字
				\foreach \x in {1,...,15} \node[anchor=north,font=\tiny] at (\x,-0.2) {\x};
				\foreach \y in {1,...,15} \node[anchor=east,font=\tiny] at (-0.2,\y) {\y};
				
				% 4. 绘制块标签
				\node[blue, above] at (1, 14.5) {\footnotesize $N_1$};
				\node[red, above]  at (2.5, 12.5) {\footnotesize $M_1$};
				\node[blue, above] at (5.5, 11.5) {\footnotesize $N_2$};
				\node[blue, above] at (8.5, 7.5)  {\footnotesize $N_3$};
				\node[red, above]  at (12, 4.5)  {\footnotesize $M_3$};
				\node[blue, above] at (14, 1.5)  {\footnotesize $N_4$};
				
				% 5. 绘制散点
				% N_1
				\filldraw[black] (1,14) circle (6pt);
				% M_1
				\filldraw[black] (2,12) circle (6pt); \filldraw[black] (3,10) circle (6pt);
				% r_1
				\filldraw[black] (4,15) circle (6pt); \draw[thick] (4,15) circle (10pt); \node[above right] at (4.2, 15) {\footnotesize $r_1$};
				% N_2
				\filldraw[black] (5,11) circle (6pt); \filldraw[black] (6,9) circle (6pt);
				% r_2
				\filldraw[black] (7,13) circle (6pt); \draw[thick] (7,13) circle (10pt); \node[above right] at (7.2, 13) {\footnotesize $r_2$};
				% N_3
				\filldraw[black] (8,7) circle (6pt); \filldraw[black] (9,2) circle (6pt);
				% M_3
				\filldraw[black] (10,6) circle (6pt); \filldraw[black] (11,4) circle (6pt); \filldraw[black] (12,3) circle (6pt);
				% r_3
				\filldraw[black] (13,8) circle (6pt); \draw[thick] (13,8) circle (10pt); \node[above right] at (13.2, 8) {\footnotesize $r_3$};
				% N_4
				\filldraw[black] (14,1) circle (6pt);
				% r_4
				\filldraw[black] (15,5) circle (6pt); \draw[thick] (15,5) circle (10pt); \node[above right] at (15.2, 5) {\footnotesize $r_4$};
			\end{tikzpicture}
			
			&
			
			% ==========================================
			% 右图：目标排列 \sigma
			% ==========================================
			\begin{tikzpicture}[scale=0.25] 
				\tikzset{    
					grid/.style={ draw, step=1cm, black!50, very thin }, 
					nblock/.style={ fill=blue!10, draw=blue, thick, dashed }, % N^{\rr}_i 用虚线框表示反转
					mblock/.style={ fill=red!10, draw=red, thick }            % M_i 块样式保持不变
				}  
				
				% 1. 绘制色块底色 (注意：这里的 N_i 变成了逆序排列，且移动到了最前面)
				\filldraw[nblock] (0.5, 0.5)  rectangle (1.5, 1.5);    % N^{\rr}_4
				\filldraw[nblock] (1.5, 1.5)  rectangle (4.5, 7.5);    % N^{\rr}_3
				\filldraw[nblock] (4.5, 8.5)  rectangle (6.5, 11.5);   % N^{\rr}_2
				\filldraw[nblock] (6.5, 13.5) rectangle (7.5, 14.5);   % N^{\rr}_1
				\filldraw[mblock] (8.5, 9.5)  rectangle (10.5, 12.5);   % M_1
				\filldraw[mblock] (12.5, 2.5) rectangle (14.5, 4.5);   % M_3
				
				% 2. 绘制网格
				\draw[grid] (0,0) grid (16,16);
				
				% 3. 坐标轴刻度数字
				\foreach \x in {1,...,15} \node[anchor=north,font=\tiny] at (\x,-0.2) {\x};
				\foreach \y in {1,...,15} \node[anchor=east,font=\tiny] at (-0.2,\y) {\y};
				
				% 4. 绘制块标签
				\node[blue, above] at (0.5, 1.5)   {\footnotesize $N^{\rr}_4$};
				\node[blue, above] at (2.5, 7.5) {\footnotesize $N^{\rr}_3$};
				\node[blue, above] at (4.5, 11.5){\footnotesize $N^{\rr}_2$};
				\node[blue, above] at (6, 14.5)  {\footnotesize $N^{\rr}_1$};
				\node[red, above]  at (9.5, 12.5) {\footnotesize $M_1$};
				\node[red, above]  at (13.5, 4.5)  {\footnotesize $M_3$};
				
				% 5. 绘制散点
				% N^{\rr}_4
				\filldraw[black] (1,1) circle (6pt);
				% N^{\rr}_3
				\filldraw[black] (2,6) circle (6pt); \filldraw[black] (3,2) circle (6pt);
				% N^{\rr}_2
				\filldraw[black] (4,7) circle (6pt); \filldraw[black] (5,9) circle (6pt);
				% N^{\rr}_1
				\filldraw[black] (6,11) circle (6pt);
				% r_1
				\filldraw[black] (7,14) circle (6pt);\filldraw[black] (8,15) circle (6pt); \draw[thick] (8,15) circle (10pt); \node[above right] at (7.2, 15) {\footnotesize $r_1$};
				% M_1
				\filldraw[black] (9,12) circle (6pt); \filldraw[black] (10,10) circle (6pt);
				% r_2
				\filldraw[black] (11,13) circle (6pt); \draw[thick] (11,13) circle (10pt); \node[above right] at (10.2, 13) {\footnotesize $r_2$};
				% r_3
				\filldraw[black] (12,8) circle (6pt); \draw[thick] (12,8) circle (10pt); \node[above right] at (11.2, 8) {\footnotesize $r_3$};
				% M_3
			   \filldraw[black] (13,4) circle (6pt); \filldraw[black] (14,3) circle (6pt);
				% r_4
				\filldraw[black] (15,5) circle (6pt); \draw[thick] (15,5) circle (10pt); \node[above right] at (15.2, 5) {\footnotesize $r_4$};
				
			\end{tikzpicture}
			
		\end{tabular}
	\end{center}
	\caption{The decompositions for $\sigma$ and $\tau=\psi(\sigma)$ in Example~\ref{eg:psi}.}\label{fig:class71-example}
\end{figure}

%%%%%%%%%%%%%%%%%%%%%%%%%%%%%%%%%%%%%%%%%%%%%%%%%%%%%%%%%%%%%%%%%%%%%%%%%%%%%%%%%%%%%%%%%
\section{Proofs of the three distributions}\label{sec:proofs_of_the_three_distributions}

Fix a pattern $p$, recall the bivariate generating function $F^p(t,q)=\sum_{n,k\ge 0}s_{n,k}(p)q^k t^n$, and its two specializations $A^p(t)=F^p(t,0)$ and $F(t)=F^p(t,1)=\sum_{n\ge 0}n! t^n$. We begin with a useful lemma concerning the mesh pattern of length $1$.

\begin{lem}[Thm. 1.1 in \cite{KZ2019}]\label{lem:short} 
For the pattern $p:=\pattern{scale=0.6}{1}{1/1}{0/1,1/0}$ (equivalently, $p:=\pattern{scale=0.6}{1}{1/1}{0/0,1/1}$), we have
$$A^p(t)=\frac{F(t)}{1+tF(t)}:=\sum_{n\ge 0}S_nt^n;\quad F^p(t,q)=\frac{F(t)}{1+t(1-q)F(t)}:=\sum_{n\ge 0}T_n(q)t^n.$$
\end{lem}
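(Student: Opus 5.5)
The plan is to reduce everything to the direct-sum (skew-sum) decomposition of permutations into indecomposable blocks, and then mark the blocks of size one. First treat $p=\pattern{scale=0.6}{1}{1/1}{0/1,1/0}$. An occurrence of $p$ in $\sigma$ is an entry $x=\sigma_i$ such that the NW cell (0,1) and the SE cell (1,0) are empty. This means every entry to the left of $x$ is smaller than $x$ and every entry to the right of $x$ is larger. Equivalently, $\sigma=\alpha\,x\,\beta$ with $\alpha$ a permutation of $[1,x-1]$ and $\beta$ a permutation of $[x+1,n]$.

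Every permutation factors uniquely as a direct sum $\sigma=\gamma_1\oplus\gamma_2\oplus\cdots\oplus\gamma_r$ of sum-indecomposable permutations. I will check the following claim: $x$ is an occurrence of $p$ if and only if $x$ forms a component $\gamma_j$ of size $1$ (i.e., $\gamma_j=1$).

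For the forward direction, the splitting $\alpha\,x\,\beta$ shows that both cuts (just before $x$ and just after $x$) are direct-sum cuts. Hence $\{x\}$ is a union of components, and therefore a single component. For the converse, a singleton component clearly has all smaller-indexed components below it and all larger-indexed components above it. Consequently $p(\sigma)$ equals the number of size-one components of $\sigma$.

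Next comes the generating-function computation. Let $I(t)$ be the generating function of nonempty sum-indecomposable permutations. Unique factorization gives $F(t)=1/(1-I(t))$, so $I(t)=1-1/F(t)$. The only indecomposable permutation of size $1$ is $1$, contributing $t$ to $I(t)$. Replacing that term by $qt$ yields
\begin{align*}
F^p(t,q)=\frac{1}{1-\bigl(I(t)-t+qt\bigr)}=\frac{1}{1/F(t)+t(1-q)}=\frac{F(t)}{1+t(1-q)F(t)},
\end{align*}
and setting $q=0$ gives $A^p(t)=F(t)/(1+tF(t))$.

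Finally, the equivalent pattern $\pattern{scale=0.6}{1}{1/1}{0/0,1/1}$ is the image of $p$ under complementation (or reversal). This operation maps occurrences bijectively and preserves length, so the two patterns have the same distribution. Equivalently, one can repeat the argument with skew-sum decomposition in place of direct-sum decomposition.

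The only step needing care is the claim that occurrences are exactly the singleton components, in particular that a singleton block is genuinely a component of the finest decomposition. This follows from uniqueness of the finest direct-sum decomposition, and the rest is routine.
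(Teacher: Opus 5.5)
Your proof is correct. Note that the paper gives no proof of this lemma; it simply imports the result as Theorem~1.1 of Kitaev--Zhang, so your argument supplies a self-contained one where the paper has only a citation.

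Your key observation is that an occurrence of $p$ is an entry $x$ with $\sigma=\alpha x\beta$, where $\alpha$ is a permutation of $[1,x-1]$ and $\beta$ of $[x+1,n]$. That is exactly a size-one block of the finest direct-sum decomposition, i.e.\ a strong fixed point. Marking the atom $t$ in $F=1/(1-I)$ then produces $F^p$ and $A^p$ at once, and shows structurally why $q$ enters only through $t(1-q)$.

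One small tightening: the justification that the singleton is a genuine block is not really ``uniqueness'' of the decomposition. The actual reason is that every position $i$ with $\{\sigma_1,\dots,\sigma_i\}=[1,i]$ must be a cut of the finest decomposition, since a component straddling it would itself split.

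A more elementary alternative, in the spirit of the leftmost-element decompositions used elsewhere in the paper (e.g.\ for Class~54), is to split at the leftmost occurrence $x$. Then $\alpha$ must avoid $p$, while the occurrences of $p$ in $\beta$ are exactly the remaining occurrences in $\sigma$. This gives $F^p=A^p+qtA^pF^p$. Setting $q=1$ yields $A^p=F/(1+tF)$, and substituting back gives $F^p=F/(1+t(1-q)F)$.

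That route needs no facts about indecomposable permutations, whereas yours gives a cleaner combinatorial interpretation of the occurrences. Your transfer to the second shading via complement (or reverse) is also correct.
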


\begin{proof}[Proof of Theorem~\ref{thm:class54}]
It is sufficient to justify the statement for the pattern $p_5:=\pattern{scale=0.6}{2}{1/1,2/2}{0/2,2/2,0/1,1/1,2/0}$. 
% \iffalse
% Given a permutation $\pi=\pi_1\cdots \pi_n\in \fS_n$ with $p(\pi)\geq 1$, that is, pattern $p$ occurs at least once in $\pi$. 
% \fi

Let $\pi=\pi_1\cdots \pi_n\in \fS_n$ be a permutation with $p_5(\pi)\geq 1$, meaning that the pattern $p_5$ occurs at least once in $\pi$. Let $b$ be the leftmost element such that there exists a certain $a<b$ and $(a,b)$ forms a $p_5$-pattern in $\pi$. Note that once $b$ is fixed, its partner $a$ is actually uniquely determined, due to the two shades at cells $(0,1)$ and $(1,1)$ in the pattern $p_5$. With respect to $(a,b)$, we can decompose the permutation as shown in the left diagram of Figure \ref{fig:class54}. Our discussion splits into the following two cases, depending on whether the elements in block $C$ avoid the pattern $\pattern{scale=0.6}{1}{1/1}{0/0,1/1}$.
\begin{itemize}
\item If block $C$ avoids the pattern $\pattern{scale=0.6}{1}{1/1}{0/0,1/1}$, then we assume the four blocks $A,B,C,D$ (as depicted in Figure~\ref{fig:class54}) contain $i,j,k,m$ elements, respectively, subject to the constraint $i+j+k+m=n-2$. 

Firstly, we note that an occurrence, say $d$, of pattern $\pattern{scale=0.6}{1}{1/1}{0/0,1/1}$ in block $D$, is in one-to-one correspondence with an occurrence $(a,d)$ of pattern $p_5$ in $\pi$. And occurrences of $p_5$ in $D$ become non-occurrences in $\pi$ due to the presence of $b$. Consequently, the $m$ elements in block $D$ contribute $T_m(t)$ to the generating function by Lemma~\ref{lem:short}. 

Secondly, for blocks $B$ and $C$, any occurrence of $p_5$ in $B$ (resp.~$C$) becomes a non-occurrence in $\pi$ due to the presence of $a$ (resp.~$b$), and no new $p_5$ occurrence can be formed with $a$ since we have assumed in this case that the $k$ elements in block $C$ avoid pattern $\pattern{scale=0.6}{1}{1/1}{0/0,1/1}$. Consequently, we can freely choose, in $\binom{j+k}{k}$ ways, the $k$ positions for the elements (all larger than $b$ in their values) in block $C$, and the remaining $j$ positions are determined as well for elements in block $B$. The total contribution is $\binom{j+k}{k}S_k$ by Lemma~\ref{lem:short}.

Lastly, for elements in blocks $A$ and $B$, their positions are already determined, and their values should all be smaller than $a$. There are no further restrictions, hence there are $(i+j)!$ ways to arrange them. Combining the above discussion, we see that the generating function in this case is given by 
\begin{align}\label{eq:C1}
\sum_{\substack{i, j, k, m\geq 0\\ i+j+k+m=n-2}}T_m(q) S_k \binom{j+k}{k}(i+j)!.
\end{align}

\item If block $C$ contains at least one occurrence of pattern $\pattern{scale=0.6}{1}{1/1}{0/0,1/1}$, then we select the rightmost such element $c$ and use it to further split the blocks $B$ and $C$ into smaller blocks $B_1,B_2$ and $C_1,C_2$; see the decomposition shown as the right diagram in Figure~\ref{fig:class54}, where the number of elements contained in each block is labeled out. In particular, notice that $B_2\neq \varnothing$ and we assume it contains $j_2+1$ elements with $j_2\ge 0$. Indeed, if $B_2=\varnothing$, then $(a,c)$ (with $c$ being to the left of $b$) would form a pattern $p_5$, which contradicts our choice of $b$. Also note the new constraint $i+j_1+j_2+k_1+k_2+m=n-4$. By a similar argument that uses Lemma~\ref{lem:short}, we see that the $m$ elements in block $D$ contribute $T_m(q)$ to the generating function.

For blocks $B_2$ and $C_2$, note that the elements in block $C_2$ avoid the pattern $\pattern{scale=0.6}{1}{1/1}{0/0,1/1}$ by the choice of $c$. Thus, the $k_2$ elements in block $C_2$ contribute $S_{k_2}$ by Lemma~\ref{lem:short}, and there are $\binom{k_2+j_2+1}{k_2}$ ways to distribute their positions over blocks $B_2$ and $C_2$. 

Next, there are $\binom{k_1+j_1}{k_1}$ ways to decide for blocks $B_1$ and $C_1$ the relative positions of the elements that they contain, and $k_1!$ ways to assign values (all larger than $c$) to those elements in block $C_1$.

Lastly, for the $i$ elements in block $A$, the $j_1$ elements in block $B_1$, and the $j_2+1$ elements in block $B_2$, their positions have all been determined. We need an extra factor $(i+j_1+j_2+1)!$ to account for the different ways of assigning values (all smaller than $a$) to these elements. Combining the above discussion, the generating function in this case is seen to be
\begin{align}\label{eq:C2}
\sum_{\substack{i, j_1, j_2, k_1, k_2, m\geq 0\\i+j_1+j_2+k_1+k_2+m=n-4}}T_m(q) S_{k_2} \binom{k_2+j_2+1}{k_2}\binom{k_1+j_1}{k_1}k_1!(i+j_1+j_2+1)!.
\end{align}
\end{itemize}
Finally, putting \eqref{eq:C1} and \eqref{eq:C2} together with $q$ (for the $p_5$-occurrence $(a,b)$) and $t^n$ (for the total number of elements), we arrive at~\eqref{class54-distr}.
\end{proof}

\begin{figure}
\begin{center}

\begin{tabular}{ccc}

\begin{tikzpicture}[scale=1.3]
  \tikzset{
    grid/.style={ draw, step=1cm, black!100, thin },
    graycell/.style={ fill=gray!50, draw=none, minimum width=1cm, minimum height=1cm, anchor=south west }
  }

  \fill[graycell] (0,2) rectangle (1,3);
  \fill[graycell] (2,2) rectangle (3,3);
  \fill[graycell] (1,1) rectangle (2,2);
  \fill[graycell] (0,1) rectangle (1,2);
  \fill[graycell] (2,0) rectangle (3,1);

  \draw[grid] (0,0) grid (3,3);

  \draw[thick] (1.1, 2.1) rectangle (1.9, 2.9);
  \draw[thick] (1.1, 0.1) rectangle (1.9, 0.9);
  \draw[thick] (0.1, 0.1) rectangle (0.9, 0.9);
  \draw[thick] (2.1, 1.1) rectangle (2.9, 1.9);

  \node[anchor=center] at (1.5, 2.5) {$C$};
  \node[anchor=center] at (0.5, 0.5) {$A$};
  \node[anchor=center] at (1.5, 0.5) {$B$};
  \node[anchor=center] at (2.5, 1.5) {$D$};

  \filldraw[black] (1,1) circle (2.5pt);
  \node[anchor=west] at (0.5,1.2) {\footnotesize{$a$}};
  \filldraw[black] (2,2) circle (2.5pt);
  \node[anchor=west] at (2,2.25) {\footnotesize{$b$}};
  \node[anchor=south] at (0.5,-0.6) {$\underbrace{\phantom{aaaaa}}_{i}$};
  \node[anchor=south] at (1.5,-0.6) {$\underbrace{\phantom{aaaaa}}_{j}$};
  \node[anchor=south] at (1.5,2.75) {$\overbrace{\phantom{aaaaa}}^{k}$};
  \node[anchor=south] at (2.5,2.75) {$\overbrace{\phantom{aaaaa}}^{m}$};
\end{tikzpicture}

&

\ \ \ 

&

\begin{tikzpicture}[scale=1.3]
  \tikzset{
    grid/.style={ draw, step=1cm, black!100, thin },
    graycell/.style={ fill=gray!50, draw=none, minimum width=1cm, minimum height=1cm, anchor=south west }
  }

  \fill[graycell] (0,2) rectangle (1,3);
  \fill[graycell] (2,2) rectangle (3,3);
  \fill[graycell] (1,1) rectangle (2,2);
  \fill[graycell] (0,1) rectangle (1,2);
  \fill[graycell] (2,0) rectangle (3,1);
  \fill[graycell] (1,2) rectangle (1.5,2.5);
  \fill[graycell] (1.5,2.5) rectangle (2,3);

  \draw[dashed] (1.5, 0) -- (1.5, 2.5);
  \draw[dashed] (1,2.5) -- (2,2.5);

  \draw[grid] (0,0) grid (3,3);

  %\draw[thick] (1.1, 2.1) rectangle (1.9, 2.9);
  %\draw[thick] (1.1, 0.1) rectangle (1.4, 0.9);
  \draw[thick] (0.1, 0.1) rectangle (0.9, 0.9);
  \draw[thick] (2.1, 1.1) rectangle (2.9, 1.9);

  \node[anchor=center] at (1.25, 2.7) {$C_1$};
  \node[anchor=center] at (1.75, 2.2) {$C_2$};
  \node[anchor=center] at (0.5, 0.5) {$A$};
  \node[anchor=center] at (1.25, 0.5) {$B_1$};
  \node[anchor=center] at (1.75, 0.5) {$B_2$};
  \node[anchor=center] at (2.5, 1.5) {$D$};

  \filldraw[black] (1,1) circle (2.5pt);
  \node[anchor=west] at (0.5,1.2) {\footnotesize{$a$}};
  \filldraw[black] (2,2) circle (2.5pt);
  \node[anchor=west] at (2,2.25) {\footnotesize{$b$}};
  \filldraw[black] (1.5,2.5) circle (2.5pt);
  \node[anchor=west] at (1.5,2.65) {\footnotesize{$c$}};
  \node[anchor=south] at (0.5,-0.6) {$\underbrace{\phantom{aaaaa}}_{i}$};
  \node[anchor=south, font=\scriptsize] at (1.25,-0.4) {$j_1$};
  \node[anchor=south, font=\scriptsize] at (1.75,-0.4) {$j_2{+}1$};
  \node[anchor=south, font=\scriptsize] at (1.25,3) {$k_1$};
  \node[anchor=south, font=\scriptsize] at (1.75,3) {$k_2$};
  \node[anchor=south] at (2.5,2.75) {$\overbrace{\phantom{aaaaa}}^{m}$};
\end{tikzpicture}

\end{tabular}
\caption{Two cases in the proof of Theorem~\ref{thm:class54}.}\label{fig:class54}
\end{center}
\end{figure}

Throughout the next proof, we use $p_6:=\pattern{scale=0.6}{2}{1/1,2/2}{0/0,0/1,0/2,1/2,2/1}$, one of the eight equidistributed patterns in Class 60.
\begin{proof}[Proof of Theorem~\ref{thm:class60}]
If $(a,b)$ is an occurrence of $p_6$ in a permutation $\sigma=\sigma_1\cdots\sigma_n\in \fS_n$, then due to the restrictions of $p_6$, we see that $a=\pi_1$ and $b>a$. Moreover, if we use $\alpha$ to denote the subword formed by all of the elements in $\sigma$ larger than $a$, then $b$ must be an occurrence of the pattern $p:=\pattern{scale=0.6}{1}{1/1}{0/1,1/0}$ in $\alpha$. Conversely, any such occurrence of $p$ in $\alpha$ combined with $a=\sigma_1$ will form a $p_6$-pattern in $\sigma$. Assuming there are $m$ elements in $\alpha$, where $0\leq m\leq n-1$, we can select the positions of these $m$ elements in $\binom{n-1}{m}$ ways and permute the remaining elements (all smaller than $a$ in value) in $(n-m-1)!$ ways. This analysis entails the calculation
\begin{align*}
[t^n] F^{p_6}(t,q) &= \sum_{m=0}^{n-1}\binom{n-1}{m}(n-m-1)![t^m]F^p(t,q)= \sum_{m=0}^{n-1}\frac{(n-1)!}{m!}T_m(q),
\end{align*}
where we have applied Lemma~\ref{lem:short} for $F^p(t,q)$.
\end{proof}

For our third and final Class 75, we not only prove Theorem~\ref{thm:class75}, but also derive a Riccati differential equation~\cite{Rei1972} for its bivariate generating function $F(t,q)$ (see Corollary~\ref{cor:riccati}). We select from Class 75 a fixed pattern $p_7:=\pattern{scale=0.6}{2}{1/1,2/2}{0/1,1/2,2/2}$ and require the next definition.

\begin{defn}\label{def:admissible pos}
Given a permutation $\sigma=\sigma_1\ldots\sigma_n$, for each $1\le i\le n$, we increase every letter in $\sigma$ by $1$ and insert a new $1$ at the position immediately to the left of $\sigma_i$ to obtain a new permutation denoted as $\sigma^{(i)}$. The position $i$ is said to be \emph{admissible}, if $213(\sigma^{(i)})=213(\sigma)$.
\end{defn}

% \begin{prop}
% For $n,k,h\ge 0$, the numbers $s_{n,k,h}(p_7)$ satisfy

% \end{prop}
% It is sufficient to justify the statement for the pattern
% $p=\pattern{scale=0.6}{2}{1/1,2/2}{0/1,1/2,2/2}$.

\begin{proof}[Proof of Theorem~\ref{thm:class75}]
The initial conditions for $s_{n,k}(p_7)$ and $s_{n,k,h}(p_7)$ are straightforward to verify. To explain (\ref{thm:class75-rec-1}), we consider generating permutations $\sigma \in \fS_n$ with $k$ occurrences of $p_7$ by inserting the new smallest element $1$ into a permutation $\sigma'\in \fS_{n-1}$ in one of $n$ possible positions. Note that inserting $1$ never decreases the number of occurrences of $p_7$ in $\sigma'$. However, if $1$ is inserted into an admissible position, then exactly one new occurrence of $p_7$ is created, formed by $1$ and the maximum element to its right. It follows that $s_{n,k}(p_7)$ is given by the sum of $n s_{n-1,k}(p_7)-\sum_{h=1}^{n-1} h \cdot s_{n-1,k,h}(p_7)$ and $\sum_{h=1}^{n-1} h \cdot s_{n-1,k-1,h}(p_7)$,
which yields (\ref{thm:class75-rec-1}). Also, note that (\ref{thm:class75-rec-2}) follows directly from the definitions.

To explain (\ref{thm:class75-rec-3}), we consider generating permutations $\sigma\in \fS_n$ with exactly $k$ occurrences of $p_7$ and $h\geq 2$ admissible positions. Since $h\geq 2$, there must exist an index $i$, with $1\leq i\leq n-1$, such that every element in $\sigma_1\cdots \sigma_i$ is larger than every element to the right of $\pi_i$. We choose the smallest such $i$.

We observe that no occurrence of $p_7$ can begin to the left of $\sigma_{i+1}$ and end to the right of $\pi_i$. Since $i$ is minimal, there are $s_{i,a,1}(p_7)$ ways to choose $\sigma_1\cdots\sigma_i$, where $0\leq a\leq k$, and independently $s_{n-i,k-a,h-1}(p_7)$ ways to choose the remaining part of $\sigma$. Summing over all possible values of $i$ and $a$ yields (\ref{thm:class75-rec-3}). This completes our proof. 
\end{proof}

% In this subsection, we demonstrate that the recursive structure established in Theorem 1.6 naturally translates into a compact non-linear differential equation. This analytical perspective compresses the distributional behavior of Class 75 into a single formula.

\begin{coro}\label{cor:riccati}
  Let $F(t,q) := \sum_{n\ge 0, k\ge 0} s_{n,k}(p) q^k t^n$ be the bivariate generating function for the distribution of any chosen pattern $p$ in Class $75$, then it satisfies the following Riccati differential equation:
  \begin{equation}\label{eq:riccati}
    t^2 \frac{\partial F(t,q)}{\partial t} = -1+(1-2t+qt)F(t,q) + t(1-q)F(t,q)^2.
  \end{equation}
\end{coro}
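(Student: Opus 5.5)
The plan is to translate the three recurrences of Theorem~\ref{thm:class75} into generating-function identities. For $h\ge 0$ let $G_h(t,q):=\sum_{n,k\ge 0}s_{n,k,h}(p)q^kt^n$ be the refined generating functions, and set $P:=G_1$. By the initial conditions, $G_0=1$, since only $n=0$ contributes. For $h\ge 1$, $G_h$ has no constant term in $t$. Also, every $n\ge 1$ permutation has some $h\in[1,n]$, so \eqref{thm:class75-rec-2} is equivalent to
\[
\sum_{h\ge 1}G_h=F-1 .
\]

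The first step is to solve for the $G_h$ in terms of $F$. Recurrence \eqref{thm:class75-rec-3} is a convolution in both $n$ and $k$. The range $1\le i\le n-1$ is automatic, because $s_{i,a,1}=0$ for $i=0$ and $s_{n-i,k-a,h-1}=0$ for $n-i=0$ when $h-1\ge1$. Hence it reads $G_h=P\,G_{h-1}$ for $h\ge2$, so $G_h=P^h$ for all $h\ge1$. Substituting into the identity above gives
\[
F-1=\frac{P}{1-P},
\]
that is, $F=1/(1-P)$ and $P=1-1/F$. The weighted sum that will appear in the next step is then
\[
H:=\sum_{h\ge1}hG_h=\frac{P}{(1-P)^2}=\Bigl(1-\tfrac1F\Bigr)F^2=F^2-F .
\]

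The second step is to multiply \eqref{thm:class75-rec-1} by $q^kt^n$ and sum over $n\ge1$, $k\ge0$.
\begin{itemize}
\item The left side gives $F-1$.
\item The term $n\,s_{n-1,k}$ gives $\sum_n n\,s_{n-1,k}t^n=t\,\frac{\partial}{\partial t}(tF)=tF+t^2\frac{\partial F}{\partial t}$.
\item The two $h$-weighted sums give $-tH$ and $+qtH$. Here the upper limit $h\le n-1$ is harmless because $s_{n-1,k,h}=0$ for $h>n-1$, including the case $n=1$. The shift $k\mapsto k-1$ produces the factor $q$.
\end{itemize}
Therefore
\[
F-1=tF+t^2\frac{\partial F}{\partial t}+t(q-1)(F^2-F).
\]
Rearranging gives $t^2\,\partial F/\partial t=-1+(1-2t+qt)F+t(1-q)F^2$, which is \eqref{eq:riccati}.

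The derivation is essentially mechanical. The only step needing care is the boundary bookkeeping: checking that $G_0=1$ and that the ranges of $i$, $h$ and $n$ in \eqref{thm:class75-rec-1} and \eqref{thm:class75-rec-3} produce exactly the products and shifts claimed, with no stray low-order terms. This is also where I would verify the identity against small cases, for instance $n=1,2$, to confirm the constant and linear terms.
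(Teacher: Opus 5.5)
Your proposal is correct and follows the paper's proof essentially step for step. Both arguments establish $G_h=G_1^h$, deduce $G_1=1-1/F$ and $\sum_{h\ge1}hG_h=F(F-1)$, and then sum recurrence \eqref{thm:class75-rec-1} term by term. The only cosmetic differences are that you read $G_h=G_1G_{h-1}$ directly off the convolution \eqref{thm:class75-rec-3}, with boundary checks the paper omits, and sum $\sum_h hP^h$ directly, whereas the paper invokes the irreducible-block decomposition and differentiates $H(t,q,z)=zG_1/(1-zG_1)$ at $z=1$.
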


\begin{proof}
  To begin with, for $h \ge 1$ we denote the generating function for the permutations with exactly $h$ admissible positions as $G_h(t,q) := \sum_{n \ge 1, k \ge 0} s_{n,k,h}(p) q^k t^n$. Noting that any permutation with $h$ admissible positions can be uniquely decomposed into $h$ irreducible (i.e., with only one admissible position) blocks, we deduce that
  $$G_h(t,q) = [G_1(t,q)]^h.$$
 On the one hand, multiplying by $q^k t^n$ and summing over all $n \ge 1$ and $k \ge 0$, we obtain
\begin{align*}
    F(t,q) - s_{0,0}(p)
    = \sum_{h \ge 1} G_h(t,q)
    = \sum_{h \ge 1} [G_1(t,q)]^h
    = \frac{G_1(t,q)}{1 - G_1(t,q)}.
\end{align*}
  Since $s_{0,0}(p) = 1$, we can solve for $G_1(t,q)$ to obtain $G_1(t,q) = 1 - \frac{1}{F(t,q)}$.

  On the other hand, we translate the main recurrence \eqref{thm:class75-rec-1} into a differential equation. Multiplying \eqref{thm:class75-rec-1} by $q^k t^n$ and summing over $n \ge 1$ and $k \ge 0$, we see that the left-hand side becomes $F(t,q) - 1$, while the right-hand side contains three terms that we analyze one by one. For the first term on the right-hand side, we apply the differential operator $t \frac{\partial}{\partial t}$ to see that
  \begin{equation}\label{eq:term 1}
    \sum_{n \ge 1, k \ge 0} n s_{n-1,k}(p) q^k t^n = t \frac{\partial}{\partial t} \big(t F(t,q)\big) = t F(t,q) + t^2 \frac{\partial F(t,q)}{\partial t}.
  \end{equation}
   In order to handle the second and third terms in \eqref{thm:class75-rec-1} that involve the multiplier $h$, we introduce a trivariate generating function $H(t,q,z) := \sum_{h \ge 1} z^h G_h(t,q)=\frac{z G_1(t,q)}{1 - z G_1(t,q)}$. Differentiating $H$ with respect to $z$ and evaluating at $z=1$ gives 
  \begin{align}\label{eq:H_derivative}
    \sum_{n \ge 1,\, k \ge 0,\, h \ge 1} h s_{n,k,h}(p) q^k t^n &= \left. \frac{\partial H(t,q,z)}{\partial z} \right|_{z=1} = \frac{G_1(t,q)}{\big(1 - G_1(t,q)\big)^2} = F(t,q)\big(F(t,q) - 1\big).
  \end{align}

  Now for the second term on the right-hand side, shifting the index $m = n-1$ and applying \eqref{eq:H_derivative} yields (note that $s_{0,k,h}(p)=0$ when $h\ge 1$):
  \begin{equation}\label{eq:term 2}
    - t \sum_{m \ge 0,\,k \ge 0,\,h \ge 1} h s_{m,k,h}(p) q^k t^m = -t F(t,q)\big(F(t,q) - 1\big).
  \end{equation}
  For the third term, a similar index shift on $k$ gives an extra factor of $q$:
  \begin{equation}\label{eq:term 3}
    qt \sum_{m \ge 0,\, j \ge 0,\, h \ge 1} h s_{m,j,h}(p) q^j t^m = qt F(t,q)\big(F(t,q) - 1\big).
  \end{equation}

  %  By multiplying both sides of \eqref{eq:riccati} by $t^k x^n$ and summing over all $n \ge 1$ and $k \ge 0$, we observe that the right-hand side is exactly the discrete Cauchy product (convolution) of the sequences corresponding to $h=1$ and $h-1$. Thus, we obtain:
  % \begin{equation}\label{eq:Gh}
  %   G_h(x,t) = G_1(x,t) \cdot G_{h-1}(x,t).
  % \end{equation}
  % By induction on $h$, this immediately yields , which structurally signifies 
  
  % Next, we rewrite (17) as $s_{n,k}(p) = \sum_{h=1}^n s_{n,k,h}(p)$ for $n \ge 1$. 

  Combining \eqref{eq:term 1}, \eqref{eq:term 2}, and \eqref{eq:term 3}, we arrive at:
  \begin{equation*}
    F(t,q) - 1 = t^2 \frac{\partial F(t,q)}{\partial t} + t F(t,q) - t(1-q)F(t,q)\big(F(t,q) - 1\big).
  \end{equation*}
  Rearranging the terms completes the proof of \eqref{eq:riccati}.
\end{proof}

\section*{Concluding remarks}\label{sec:conclud}

The present work brings the classification of mesh patterns of length $2$ significantly closer to completion, but leaves a single outstanding equidistribution problem unresolved. As explained above, resolving this final case would simultaneously settle both the remaining equidistribution and Wilf-equivalence classifications. We conjecture that the remaining open equidistribution (Class~69 in Remark~\ref{class-69-remark-equivalence}) continues to hold when restricted to the class of involutions, i.e., permutations whose cycles have length at most $2$.

\begin{conj}
The patterns in the set $\{\hspace{-1mm}\pattern{scale=0.5}{2}{1/1,2/2}{1/2,1/1,2/1,0/0},
\pattern{scale=0.5}{2}{1/1,2/2}{2/2,0/1,1/1,1/0},
\pattern{scale=0.5}{2}{1/1,2/2}{0/2,1/1,2/1,1/0},
\pattern{scale=0.5}{2}{1/1,2/2}{1/2,0/1,1/1,2/0}\}$ are equidistributed on involutions. (The first two patterns, as well as the last two patterns, are trivially equidistributed via the composition of reverse and complement.)
\end{conj}

At present, it is not clear whether the original conjectured equidistribution or its restriction to involutions is the more structurally meaningful statement. It is conceivable that the involution-restricted version admits a more direct or simpler proof, or alternatively, that the full equidistribution is the more natural and tractable object. Clarifying this relationship remains an interesting direction for future research.

% ---------------------------------------------------------
\section*{\bf Acknowledgments}
 Shishuo Fu was partially supported by the Fundamental Research Funds for the Central Universities (grant no.~2025CDJ-IAISYB-008).

\end{document}